\newtheorem{theorem}{Theorem}[section]
\newtheorem{remark}[theorem]{Remark}
\newtheorem{definition}[theorem]{Definition}
\newtheorem{example}[theorem]{Example}
\newtheorem{assumption}[theorem]{Assumption}
\newcommand{\R}{\mathbb R}
\newcommand{\PP}{\mathbb P}
\newcommand{\TT}{\mathcal{T}}
\newcommand{\FF}{\mathcal{F}}
\newcommand{\EE}{\mathcal{E}}
\newcommand{\diff}[1]{{\mathrm{d}{#1}}}
\newcommand{\ee}{e}
\newcommand{\ResKs}{{{\Phi}^K_\sigma}}
\newcommand{\tildResKs}{{{\tilde{\Phi}}^K_\sigma}}
\newcommand{\hResKs}{{{\hat{\Phi}}^K_\sigma}}
\newcommand{\bbf}{{\mathbf {f}}}
\newcommand{\bn}{{\mathbf {n}}}
\newcommand{\dpar}[2]{\dfrac{\partial #1}{\partial #2}}
\newcommand{\bu}{\mathbf{u}}
\newcommand{\bv}{\mathbf{v}}
\newcommand{\bbg}{\mathbf{g}}
\newcommand{\bx}{\mathbf{x}}
\renewcommand{\div}{\operatorname{div}}
\newcommand{\est}[1]{\left\langle#1\right\rangle}
\newcommand{\bU}{\mathbf{U}}
\newcommand{\bV}{\mathbf{V}}
\newcommand{\mean}[1]{\overline{#1}}
\newcommand{\bbfh}{{\mathbf {f}^h}}
\newcommand{\VV}{\mathcal{V}}
\newcommand\norm[1]{\left\lVert#1\right\rVert}
\NewDocumentCommand{\mat}{mo}{%
  \IfValueTF{#2}{%
    \textrm{#1}{#2}
  }{%
    \textrm{#1}\,
  }%
}
\def\R{\mathbb{R}}
\definecolor{darkspringgreen}{rgb}{0., 0.55, 0.3}
\definecolor{dartmouthgreen}{rgb}{0.05, 0.5, 0.06}
\definecolor{etonblue}{rgb}{0.59, 0.78, 0.64}
\definecolor{airforceblue}{rgb}{0., 0.4, 0.66}
\definecolor{arylideyellow}{rgb}{0.91, 0.84, 0.42}
\definecolor{emerald}{rgb}{0.31, 0.78, 0.47}
\definecolor{uclagold}{rgb}{1.0, 0.7, 0.0}
\definecolor{cadmiumorange}{rgb}{0.93, 0.53, 0.18}
  \newcommand{\bbfnum}{\mathbf{f}^\mathrm{num}}
  \newcommand{\bbgnum}{\mathbf{g}^\mathrm{num}}
\newcommand{\CU}{\mathcal{U}}
\newsavebox{\DelimiterBox}
\newlength{\DelimiterHeight}
\newlength{\DelimiterDepth}
\newsavebox{\ArgumentBox}
\newlength{\ArgumentHeight}
\newlength{\ArgumentDepth}
\newlength{\ResizedDelimiterHeight}
\newlength{\ResizedDelimiterDepth}
\newcommand{\encloseby}[3]{%
  \savebox{\ArgumentBox}{$\displaystyle #1$}%
  \settoheight{\ArgumentHeight}{\usebox{\ArgumentBox}}%
  \settodepth{\ArgumentDepth}{\usebox{\ArgumentBox}}%
  \savebox{\DelimiterBox}{#2}%
  \settoheight{\DelimiterHeight}{\usebox{\DelimiterBox}}%
  \settodepth{\DelimiterDepth}{\usebox{\DelimiterBox}}%
  \setlength{\ResizedDelimiterHeight}{%
    \maxof{1.2\ArgumentHeight}{\DelimiterHeight}%
  }
  \setlength{\ResizedDelimiterDepth}{%
    \maxof{1.2\ArgumentDepth}{\DelimiterDepth}%
  }
  \raisebox{-\ResizedDelimiterDepth}{%
    \resizebox{\width}{\ResizedDelimiterHeight+\ResizedDelimiterDepth}{%
      \raisebox{\DelimiterDepth}{#2}%
    }%
  }
  #1
  \raisebox{-\ResizedDelimiterDepth}{%
    \resizebox{\width}{\ResizedDelimiterHeight+\ResizedDelimiterDepth}{%
      \raisebox{\DelimiterDepth}{#3}%
    }%
  }
}
  \newcommand{\jump}[1]{\encloseby{#1}{$[\mkern-4mu[$}{$]\mkern-4mu]$}}
  \newcommand{\jump}[1]{\encloseby{#1}{$[\mkern-3mu[$}{$]\mkern-3mu]$}}
\newcommand{\bm}{\mathbf{m}}
\newcommand{\remi}[1]{\textcolor{black}{#1}}
\newcommand{\myrho}{\lambda}
\newcommand{\rev}[1]{\textcolor{black}{#1}}
\begin{document}
\title{On the convergence of residual distribution schemes for the compressible Euler equations via dissipative weak solutions}

\author{R\'emi Abgrall\thanks{Institute of Mathematics, Universit\"at  Z\"urich, Z\"urich, Switzerland}, 
 M\'aria Luk\'acova-Medvid'ov\'a\thanks{Institute of Mathematics,
Johannes-Gutenberg University Mainz, Germany},
and Philipp \"Offner\thanks{ Corresponding author: poeffner@uni-mainz.de or mail@philippoeffner.de, Institute of Mathematics,
Johannes-Gutenberg University Mainz, Germany}
}

\date{November 20, 2022}
\maketitle

\begin{abstract}
In this work, we prove the  convergence of residual distribution schemes to dissipative weak solutions of  the Euler equations. We need to guarantee that  the residual distribution schemes are fulfilling  the underlying structure preserving properties such as 
positivity of density and internal energy. Consequently, the residual distribution schemes lead to a consistent and stable approximation of the Euler equations. 
Our result can be seen as a generalization of the Lax-Richtmyer equivalence theorem to nonlinear problems that consistency plus stability is equivalent to convergence. 
\end{abstract}

\section{Introduction}

Hyperbolic conservation laws play a fundamental role within mathematical models for various physical processes, including fluid mechanics, electromagnetism and wave phenomena and  the Euler equations in gas dynamics 
are  one of the most -or even the most- investigated system in such context. 
However, due to the recent results \cite{deLellis2010admissibility, chiodaroli2015global,feireisl2020oscillatory}
of non-uniqueness of weak entropy solutions of the Euler equations, more and more attention has been given to an alternative solution concept. Measure-valued solutions (MVS) already introduced by DiPerna \cite{diperna1985compensated} has been taken up again and  further extended to dissipative weak solutions (DW). 
In a series of papers \cite{carrillo2017weak, feireisl2019uniqueness,breit2020dissipative, ghoshal2021uniqueness, feireisl2021note}, existence and weak-strong uniqueness results of the  Euler equations (barotropic, complete) has been demonstrated whereas 
in \cite{feireisl2019convergence} the convergence of some numerical schemes inside this framework has been firstly studied and demonstrated. To this goal stability and consistency of a numerical scheme is needed. 
Due to the weak-strong uniqueness principle, it can be proven  that the numerical solutions converge strongly to the strong solution on its lifespan. In \rev{ \cite{lukavcova2021convergence, feireisl2021numerics}} these results were extended to several finite volume  (FV)  methods and quite recently, 
in \cite{lukacova2022convergence}, a convergence analysis via DW solutions of a particular discontinuous Galerkin scheme has been performed. In the current work, we will extend those investigations and prove the convergence of residual distribution (RD) methods to the Euler equations via DW solutions. 
RD, also known under the name fluctuation splitting, is a uniforming framework for several high-order finite-element (FE) methods including continuous and discontinuous Galerkin, flux reconstruction, etc.. Today, RD is interpreted in a FE framework \cite{abgrall2017high}, but historically, it has been seen in a finite volume context.  The first basic idea of RD has been already  described by Roe in his seminal work \cite{roe1981approximate} where the author 
initially suggested to see the integral of the divergence of the flux of a hyperbolic conservation as a measurement of the error, i.e. a fluctuation that could be possibly evolved in such way that its decomposition in signals allows to evolve the approximation towards the sought solution. In forthcoming works \cite{roe1982fluctuations, roe1986characteristic} the RD idea has been further  extended whereas the first really pure RD scheme was properly proposed in \cite{ni1981multiple}. As mentioned before, nowadays RD is interpreted in a FE setting. \rev{First, mainly for steady-state problems \cite{deconinck2004residual}, the approach was extended to unsteady problems in several contexts, cf. \cite{abgrall2017high, ricchiuto2010explicit}. We follow this modern FE interpretation described in \cite{abgrall2017high} and demonstrate convergence of consistent structure-preserving RD schemes via dissipative weak solutions. }  \\
The rest of the paper is organized as follows: In Section \ref{se:Foundation} we introduce the definition of dissipative weak solutions for the Euler system of gas dynamics. Next, in Section \ref{sec_RD} we describe the RD schemes, repeat some basic properties and explain how we can ensure that our RD schemes 
fulfill discretely the underlying physical laws and are structure preserving. In Section \ref{se:Consistency} we further demonstrate that our RD schemes yields a consistent approximation of the Euler equations. Using this property, we can finally ensure the convergence results presented in Section  \ref{se:convergence}. We verify our theoretical results by numerical experiments. Conclusions in Section \ref{sec_conclusion} finishes the main part of this paper. Finally, in Appendix \ref{appendix}, we demonstrate that the LxF-RD scheme is positivity preserving using either an explicit or implicit time-integration method.

\section{Dissipative Weak Solutions for the Complete Euler System} \label{se:Foundation}
 
We focus  on  two-dimensional Euler equations of gas dynamics for simplicity. An extension to three dimensional problems can be done in an analogous way. The Euler equations build a hyperbolic conservation law for  conserved variables density $\rho$, momentum $\bm=\rho \bu$ and total energy 
$E=\frac{1}{2} \rho |\bu|^2 +\rho e$.   Here $e$ is the internal energy and $\bu:=(u_1, u_2)^T$ the velocity field. 
Written in a compact form we have 
\begin{equation}\label{eq_Euler_conservation}
\partial_t \bU+\div \mathbf{f}=0
\end{equation}
in the space-time domain $(t, \bx) \in (0,T)\times \Omega$. Here,  $\bU=(\rho, \bm, E)^T$ denotes the conserved vector  and $\mathbf{f}_m=(\rho u_m, u_m \bm +p \mathbf{e}_m, u_m(E+p))^T, m=1,2$ are the flux functions where $\mathbf{e}_m$ represents the m-th row of the unit matrix.
The equation of state for an ideal gas 
$p=(\gamma-1)\rho e$ with $\gamma>1 $ and pressure $p$ is used. 
We consider \eqref{eq_Euler_conservation} in the bounded space domain $\Omega \in \R^2$ and equip them with periodic or no-flux boundary conditions.
The mathematical entropy for \eqref{eq_Euler_conservation} is defined by 
\begin{equation}\label{eq:entropy_convex}
\eta=- \frac{\rho  s}{\gamma-1}
\end{equation} 
with thermodynamic entropy   $s:=\log \frac{p}{\rho^{\gamma}}$.
The corresponding entropy flux $\mathbf{g}:=(g_1, g_2)$ is defined by $g_m=\eta \cdot u_m$, $m=1,2$, with the velocity vector $\bu$. 
Due to the strict convexity of the  entropy  \eqref{eq:entropy_convex} (if $\rho >0$ and $p>0$), we can work instead of $\bU$ with the entropy variable
\begin{equation}\label{eq:entropy_variables}
\mathbf{V}=\eta'(\bU)= \left(\frac{\gamma}{\gamma-1}-\frac{s}{\gamma-1}-\frac{\rho |\mathbf{\bu}|^2}{2p}, \frac{\rho u_1}{p},\frac{\rho u_2}{p},
-\frac{\rho}{p} \right)^T.
\end{equation}
Finally, we denote by  $\Psi=\rho \bu$ the entropy potential. 
Additionally to \eqref{eq_Euler_conservation}, we require the following entropy inequality
 \begin{equation}\label{iq:entropy}
 \frac{\partial}{\partial t} \eta +\div \bf{g} \leq 0.
 \end{equation}
 
In this work, we focus on the convergence properties of the general residual distribution methods to 
\textbf{dissipative weak (DW) solution} for the Euler equations. RD includes many high order FE-based schemes like continuous discontinuous Galerkin (CG/DG), flux reconstruction (FR) and streamline upwind Petrov-Galerkin (SUPG) in a common framework \cite{offner2020stability}.

We consider the Euler equations  with either periodic boundary or no-flux boundary conditions.
For the definition, we need also the following notations from \cite{feireisl2021numerics}. 
The symbol $\mathcal{M}^+(\overline{\Omega})$ denotes the set of all positive  \textbf{Radon measures} that can be identified at the space of all linear forms on $C_c(\overline{\Omega})$, especially if $\overline{\Omega}$ is compact,
i.e. $[C_c(\overline{\Omega})]^*=\mathcal{M}(\overline{\Omega})$. Finally, the symbol  $\mathcal{M}^+(\overline{\Omega}; \R^{d\times d}_{sym})$ denotes the set of  \emph{positive semi-definite matrix valued measures}, i.e.
\begin{equation*}
\mathcal{M}^+(\overline{\Omega}, \R^{d\times d}_{sym}) = \left\{ \nu \in \mathcal{M}^+(\overline{\Omega}, \R^{d\times d}_{sym}) \big|
\int_{\overline{\Omega}} \phi(\xi \otimes\xi): \diff \nu\geq0  \text{ for any } \xi \in \R^d, \phi \in C_c(\overline{\Omega}), \phi\geq 0 
 \right\}.
\end{equation*}
With these notations, we can finally give the following definition of a dissipative weak solution following \cite{feireisl2021numerics}:
\begin{definition}[Dissipative weak solution for the Euler equations]\label{def_dmv}
Let $\Omega\subset \R^2$ be a bounded domain.  Let the initial condition $[\rho_0, \bm_0, \eta_0]$ with $\rho_0>0$ and $\int_{\Omega} \frac{1}{2} 
\frac{|\bm_0|^2}{\rho_0} + e(\rho_0, \eta_0)\diff \bx <\infty$, we call $ [\rho, \bm, \eta]$ a \textbf{dissipative weak solution}  of the complete Euler system
with periodic conditions or no-flux boundary conditions  if the following holds:
\begin{itemize}
\item  $\rho  \in C_{weak}([0,T];L^{\gamma}(\Omega) )$, 
$\bm  \in C_{weak}([0,T];L^{\frac{2\gamma}{\gamma+1}}(\Omega;\R^2) )$, $\eta \in  L^{\infty}(0, T; L^{\gamma}(\Omega)) \cap BV_{weak}([0,T];L^{\gamma}(\Omega)).$ 
\item  There exists a measure $\mathfrak{E} \in  L^{\infty}(0,T;\mathcal{M}^+(\overline{\Omega}))$ (energy defect),  such that the \textbf{energy inequality} 
\begin{equation*}
 \int_{\Omega} \left[   \frac{1}{2} \frac{|\bm|^2}{\rho} + \rho e(\rho,\eta )\right]  ( \tau,\cdot) \diff \bx + \int_{\Omega}   \diff{\mathfrak{E}}(\tau)
 \leq  \int_{\Omega} \left[ \frac{1}{2} \frac{|\bm_0|^2}{\rho_0} +\rho_0 e(\rho_0,\eta_0) \right] \diff \bx
\end{equation*} 
is fulfilled for a.a.  $0\leq \tau\leq T$.
\item The weak formulation of the  \textbf{equation of continuity}
\begin{equation*}
\left[  \int_{\Omega} \rho \varphi \diff \bx \right]_{t=0}^{t=\tau}= \int_{0}^\tau \int_{\Omega} 
\left[ \rho \partial_t \varphi + \bm \cdot \nabla_{\bx} \varphi  \right] \diff \bx \diff t
\end{equation*} 
is satisfied for any $0 \leq \tau \leq T$  and any $\varphi \in C^\infty([0,T]\times \overline{\Omega})$. 
\item The integral identity derived from the \textbf{momentum equation}
 \begin{align*}
 \left[  \int_{\Omega} \bm \cdot \mathbf{\varphi} \diff \bx\right]_{t=0}^{t=\tau}
 = &\int_{0}^\tau \int_{\Omega} 
 \left[ \bm \cdot \partial_t \mathbf{\varphi }+ 
 1_{\rho>0} \frac{ \bm \otimes  \bm }{\rho}   : \nabla_{\bx} \mathbf{\varphi} +1_{\rho>0} p(\rho, \eta)  \div_{\bx} \mathbf{\varphi} \right]
 \diff \bx \diff t 
   + \int_{0}^\tau \int_{\Omega} \nabla_{\bx} \mathbf{\varphi}: \diff{\mathfrak{R}}
 \end{align*}
holds for any $0 \leq \tau\leq T$ and any test function $  \mathbf{\varphi} \in C^\infty([0,T]\times \overline{\Omega};\R^d)$ and in case of no-flux boundary conditions 
additionally $\varphi \cdot \bn|_{\partial \Omega}=0$ there.  $
 \mathfrak{R}\in L^{\infty} \left(0,T; \mathcal{M}\left(\overline{\Omega}, \R^{d\times d}_{sym}\right) \right) 
$ is the so-called  Reynolds defect. 
 \item The  weak formulation of the  \textbf{entropy  inequality} 
\begin{equation*}
\begin{aligned}
\left[\int_{\Omega} \eta \varphi  \diff \bx \right]_{t=\tau_1-}^{t=\tau_2+} 
\leq&  \int_{\tau_1}^{\tau_2} \int_{\Omega} \left[ 
\eta \partial_t \varphi +\est{\nu; 1_{\tilde{\rho}>0} \left( \tilde{\eta} \frac{\tilde{\bm}}{\tilde{\rho}} \right) } \cdot \nabla_\bx \varphi  \right] \diff \bx \diff t\\
\eta(0^-, \cdot) =& \eta_0
\end{aligned}
\end{equation*}
holds for any $0\leq \tau_1 \leq \tau_2 <T$, 
any $\varphi \in C_c^\infty((0,T)\times \Omega), \varphi\geq 0$, where $\{ \nu_{t,\bx}\}_{(t, \bx) \in (0,T) \times \Omega}$ is a 
parametrized (Young) measure
\begin{equation}\label{eq:Young}
\begin{aligned}
\nu \in L^{\infty}((0,T)\times \Omega;\mathcal{P}(\mathcal{F})), \mathcal{F}=\left\{ \tilde{\rho} \in \R, \tilde{\bm}\in \R^d, \tilde{\eta}\in \R  \right\};\\
\est{\nu, \tilde{\rho}} =\rho, \est{\nu, \tilde{\bm}} =\bm, \est{\nu, \tilde{\eta}} =\eta,\\
\nu_{t, \bx}\left\{ \tilde{\rho} \geq 0, (1-\gamma)\tilde{\eta} \geq \underline{s} \tilde{\rho} \right\}=1 \text{ for a. a. } (t,\bx) \in (0,T)\times \Omega; 
\end{aligned}
\end{equation} 
\item There exists constants $0\leq c_1 \leq c_2$  such that the  \textbf{defect compatibility condition} on
$
c_1 \mathfrak{E} \leq \operatorname{tr} [\mathfrak{R}] \leq c_2 \mathfrak{E}
$
holds.
\end{itemize}
\end{definition}
The advantage of dissipative weak solutions is that the solution concept is compatible with the classical solution concept. 
As it is shown in \cite{feireisl2021numerics}  if $[\rho, \bm, \eta]$ belongs to 
\begin{equation}\label{eq_regularity}
\rho \in C^1([0,T]\times \overline{\Omega}), \; \inf_{(0,T)\times \Omega} \rho>0, \; \mathbf{u} \in C^1([0,T]\times \overline{\Omega}; \R^d), \; \eta \in C^1([0,T]\times \overline{\Omega})
\end{equation}
then $[\rho, \bm, \eta]$ is a classical solution of the complete Euler system. Therefore, if a classical solution exists the DW solutions coincide with the classical one.

\section{Residual Distribution Schemes}\label{sec_RD}
In the following part, we describe shortly the numerical method under consideration the general residual distribution scheme and introduce the used notation.
We repeat their basic properties and demonstrate how entropy dissipative RD schemes can be constructed following 
\cite{abgrall2018general, abgrall2019reinterpretation}. 
Finally, we also introduce the multi-dimensional optimal order detection (MOOD) algorithm from \cite{clain2011high}, \rev{applied in \cite{zbMATH07300580, zbMATH07517733, zbMATH07568430, zbMATH07524776} in different frameworks and how it is used in our RD context} following \cite{bacigaluppi2019posteriori}. We use this approach to ensure the positivity of pressure and density inside the calculation as well as a discrete maximum principle. This is essential since the physical constraints have to be fulfilled.
Now, we focus on the general residual distribution approach. 
In the following, we give some short remarks on the behaviour of RD schemes  concerning 
the properties needed for our convergence analysis.

\subsection*{Geometrical Notations}
Before we start, we fix the main notation used in the manuscript. 
The computation domain $\Omega$ is covered by a grid $\TT_h$ (triangles, quads, general polygons). We denote by $\EE_h$ the set of internal edges /faces of $\TT_h$ and $\FF_h$ the set of boundary faces. $K$ denotes the generic mesh element, while we use $\ee$ for a face/edge $e \in \EE_h \subset \FF_h$. We assume that the mesh is shape regular, $h_K$ represents the diameter of the element $K$, $|K|$ its area and $h=\max_K h_K$ Similarly, we have $h_{\ee}$ for faces/edges. We use a classical FE approximation and follow Ciarlet's definition 
\cite{ciarlet2002finite}. We have a set of degrees of freedom (DOFs) $\sum_K$ of linear forms acting on the set $\PP^p$ of polynomials of degree $p$ such that the linear mapping 
$$
q\in \PP^p \to (\sigma_1(q), \cdots, \sigma_{|\sum_K|}(q))
$$
is one-to-one. The space $\PP^p$ is the set of polynomials of degree less or equal to $p$. It is spanned by the basis function $\{ \phi_\sigma \}_{\sigma \in \sum_K}$ 
defined by $\forall \sigma, \sigma', \sigma (\phi_{\sigma'}) = \delta_{\sigma}^{\sigma'}$. Here, we denote by $\sigma$ a generic DOF and $\delta$ is the classical Kronecker symbol. Elements of such representation are either Lagrange polynomials or B\'ezier/Bernstein polynomials where the DOFs are associated to points in $K$. It is important that for any $K$ the following properties holds:
$
\forall \bx \in K, \quad \sum_{\sigma \in K} \phi_{\sigma}=1.
$
We define by 
\begin{equation}
\VV^h = \bigoplus_K \left\{ \bv \in L^2(K), \bv_K \in \PP^p \right\}
\end{equation}
and since we are working with continuous or discontinuous FE schemes, we are searching our solutions for the Euler equation \eqref{eq_Euler_conservation} 
\begin{enumerate}
\item[S1] either in $V^h=\VV^h$, 
\item [S2]or $V^h= \VV^h \cap C^0(\Omega)$. 
\end{enumerate}
In S1, we allow discontinuities across internal edges of $\TT_h$. We are in the classical DG or FR setting \cite{offner2020stability}. Here, we need no conformity requirement on the mesh which is needed in continuous case S2.
Finally, for $\ee \in \EE_h$ represent any internal edge, i.e. $\ee \subset K \cap K^+$ for two neighboring elements $K$ and $K^+$, and we define the mean value $\mean{\bv}= \frac{1}{2}( \bv|_K+ \bv|_{K^+})$ and jump $\jump{\bv}= \bv|_{K^+}- \bv_{K}$. 
Finally, we make the following assumption on the considered grid:
\begin{assumption}
\label{H0}
The mesh $\TT_h$ is conformal and shape regular. By shape regular, we mean that all elements
 are roughly the same size, more 
precisely that there exist constants 
 $C_1$ and $C_2$ such that for any element $K$:
$$ \; \; C_1 \leq \sup_{K \in \mathcal{K}_h} \dfrac{h^2}{|K|} \leq C_2.$$
\end{assumption}
Assuming that the mesh is conformal, is mostly for simplicity in the case S1, while it is mandatory in the case S2.
We say that two elements are neighbours if they have a common edge.

The symbol $\oint$ will be used for a surface or boundary integral computed with a quadrature formula.

\subsection*{Residual Distribution Schemes}

RD is a uniform framework for several high-order FE based methods \cite{offner2020stability}. For simplification, 
we explain first RD for a steady state problem 
\begin{equation}\label{steadyversion}
\div \bbf (\bU) = 0.
\end{equation}
Due the FE element ansatz 
 an approximation of a solution of \eqref{steadyversion} is expressed by a linear combination of basis functions of $V^h$:
\begin{equation}
\bU(\bx) \approx \bU^{h} = \sum_{\sigma \in \Omega_h} \bU_\sigma \phi_\sigma(x), \; x \in \Omega, \;\phi_\sigma \in V^h . \label{fem}
\end{equation}
In terms of the Euler equations, the representation is done for each component. Finally, we have to calculate the coefficients at any DOF, the approach works in three steps:
\begin{enumerate}
\item Define $\Phi^K(\bU) :=\int_{\partial K} \bbfnum (\bU^{h,K}, \bU^{h,K^+}, \bn) \text{d} \gamma$,
where $\bbfnum$ is any consistant numerical flux. 
\item Define the local/element residual $\Phi_\sigma^K$ as the contribution of a DOF $\sigma$ to the total residual of the element $K$.
It is 
\begin{equation}\label{eq:residual_basic}
  \sum\limits_{\sigma \in K} \ResKs(\bU^h) =\Phi^K(\bU^h) 
\end{equation} 
\item Finally, all local residuals belonging to one DOF $\sigma$ are collected and summed up. This gives the equation for that DOF $\bU_\sigma$, i.e.,
\begin{equation}\label{eq:RDequationDof}
\sum_{K|\sigma \in K} \Phi_\sigma^K= 0, \quad \forall \sigma \in K.
\end{equation}
\end{enumerate}
The advantage of RD is its generality. No further constraints are formulated besides the fact that 
 the element residual have to fulfill the following conservation relation:
\begin{equation}\label{eq:residual_conservation}
  \sum\limits_{\sigma \in K} \ResKs(\bU^h) = \int_{\partial K} \bbfnum (\bU^{h,K}, \bU^{h,K^+}, \bn) \text{d} \gamma,
\end{equation} 
where $\bbfnum$ is any consistant numerical flux. In the case of S2, this reduces to the flux and above relation simplify to 
\begin{equation}\label{eq:residual_conservation_CFE}
  \sum\limits_{\sigma \in K} \ResKs(\bU^h) = \int_{\partial K}  \bbf(\bU^h)\cdot \bn \;  \text{d} \bx.
\end{equation} 

In both cases, the integrals are evaluated by quadrature. We assume throughout the paper that the quadrature points are defined on the edges/faces of the elements $K$. This implies that for any edge/face $e$ that is shared by $K$ and $K'$
$\oint_e\bbfnum (\bU^{h,K}, \bU^{h,K^+}, \bn) \text{d} \gamma$ seen from $K$ is $-\oint_e\bbfnum (\bU^{h,K}, \bU^{h,K^+}, \bn) \text{d} \gamma$ seen from $K'$ because we have the \emph{same} quadrature points and the outward normals are opposite.
This will ensure local conservation.

The final discretization of \eqref{steadyversion} reads: for any $\sigma$, 
\begin{equation}
\sum_{K\in \Omega| \sigma \in K} \ResKs(\bU^h)+ \textbf{Boundary terms}=0.
\end{equation}
We omit the discussion about boundary terms for simplicity as it is mainly a technical, but fundamental, detail, cf. \cite{offner2020stability}.
The 
 choice of $ \Phi_\sigma^K$ together with the underlying approximation space $V^h$ fully determines the scheme and the framework we are working in. As mentioned before, RD is a unifying framework. We give now a couple of examples that are included in our further analysis.

 \begin{example}\label{Various_RD}$ $
 \begin{itemize}
 \item 
 A pure continuous Galerkin discretization can be written in residual form as
\begin{equation}\label{Galerkin}
 \Phi_\sigma^K(\bU^h) = \int_K \phi_\sigma \nabla \cdot \bbf (\bU^h) \diff \bx = \int_{\partial K} \phi_\sigma \bbf (\bU^h) \cdot \bn \diff \gamma - \int_K \nabla \phi_\sigma \cdot \bbf (\bU^h) \diff \bx.
\end{equation}
\item For a Galerkin discretization with jump stabilization, the residual are defined by:
\begin{equation} \label{eq_Galerkin_jump}
 \Phi_\sigma^K(\bU^h) = \int_{\partial K} \phi_\sigma \bbf (\bU^h) \cdot \bn \diff \gamma - \int_K \nabla \phi_\sigma \cdot \bbf (\bU^h) \diff \bx 
 + \sum_{e \in \EE} \lambda_{\ee} h^2_\ee \int_\ee \jump{ \nabla \bU^h } \cdot \jump{\nabla \phi_\sigma } \diff \gamma,
\end{equation}
where $\lambda$ is a stabilization coefficient \cite{burman2004edge, michel2021spectral}.
 \end{itemize}
 In both expressions \eqref{Galerkin} - \eqref{eq_Galerkin_jump} above, the mesh has to be conformal. 
 \begin{itemize}
 \item The DG discretization can be written in residual form as
\begin{equation}
 \Phi_\sigma^K(\bU^h) = \int_{\partial K} \phi_\sigma \bbfnum (\bU^{h,K}, \bU^{h,K^+})\diff \gamma - \int_K \nabla \phi_\sigma \cdot \bbf (\bU^h) \diff \bx,
 \end{equation}
 \item while the FR residuals are defined by:
 \begin{equation}
 \Phi_\sigma^K(\bU^h) = \int_{\partial K} \phi_\sigma \bbfnum (\bU^{h,K}, \bU^{h,K^+})\diff \gamma - \int_K \nabla \phi_\sigma \cdot \bbf (\bU^h) \diff \bx
 - \underbrace{\int_K \nabla \phi_\sigma \cdot \mathbf{\alpha} \nabla \Psi \diff \bx}_{:= \mathcal{C}_\sigma}
 \end{equation}
 with the following constraints $ \nabla \Psi \equiv 1$ and $\sum_{\sigma \in K} \mathcal{C}_\sigma=0$, cf. \cite{huynh2007flux, trojak2022extended, trojak2022extended_2, offner2020stability, cicchino2022nonlinearly}. 
 \item Instead of working with these classical schemes we can use RD itself to formulate a new method. 
Here, the so-called nonlinear Lax-Friedrich RD scheme should be named. 
It is given by  
 $$
 \Phi_\sigma^K = \beta_{\sigma}^K  \Phi^K,
 $$
 where the coefficient $\beta$ are designed in such a way that the scheme is both monotonicity preserving 
 and formally $p+1$th order accurate if a polynomial approximation of order $p$ is applied. 
 As described in \cite{abgrall2011construction_2} this can be done following the two steps:
 \begin{enumerate}
 \item First evaluate the Rusanov/local Lax-Friedrich (LxF) residuals
 \begin{equation}\label{eq_LxF_residual}
 \Phi_\sigma^{K, LxF}= \frac{\Phi^K}{N_K} +\alpha_K( \bU_{\sigma} - \mean{\bU}^K),
 \end{equation}
 where $\alpha_K$ is the larger than the maximum on $K$ of $\norm{\nabla \bbfh}_{L^\infty}$, $N_K$ is the number of DOFs on $K$ 
 and $\mean{\bU}^K$ is the arithmetic average of $\bU_{\sigma}$ for $\sigma \in K$.
 \item Define $\bx_{\sigma}$ as the ratio of  $\Phi_\sigma^{LF,K}$ by $\Phi^K$, and
 $\beta_{\sigma} = \frac{ \max(\bx_{\sigma} ,0)}{ \sum_{\sigma' \in K} \max(\bx_{\sigma} ,0)} .
 $
 \end{enumerate}
 \end{itemize}
\end{example}
 All of the above residuals will be considered in the following text. 
 Finally for the generic residual $\Phi_\sigma^K(\bU^h)$, we define its stencil $\mathcal{S}_\sigma$, the set of degrees of freedom, that are needed to evaluate it, in other words,
$$\Phi_\sigma^K(\bU^h)=\Phi_\sigma^K\big(\bU_{\sigma'}, \sigma'\in \mathcal{S}_\sigma\big).$$
Then, we make the additional assumption on the residual:
 \begin{assumption}
\label{H2} Let $\TT_h$ be a triangulation satisfying Assumption \ref{H0}.
For any $C\in \R^+$, there exists $C'( C, \TT_h)\in \R^+$ which depends only on 
$C$ and $\TT_h$ such that for any 
$ \bU \in V^h, \text{ with } \norm{\bU}_{L^{\infty}(\R^2)}\leq C$ we have 
\begin{equation}
\label{eq:conservation}
\forall K, \forall \sigma,\quad  \norm{\Phi_{\sigma}^{K} (\bU)} \leq C'(C, \TT_h) \; h^{d-1}\;
\sum_{\sigma'\in \mathcal{S}_\sigma}\norm{\bU_{\sigma'}-\bU_{\sigma}},
\end{equation}
where $\norm{\cdot}$ denotes the Euclidian norm.
\end{assumption}
Note that $d$ is set to two in our case.  Here we introduce the general setting. We will also demonstrate the consistency estimation in Section \ref{se:Consistency} for general dimension $d$ for completeness. Finally, we use $C$ as a generic constant in the following part.

\subsection*{Entropy Correction Term.}\label{se:Correction}
It will be essential that our RD schemes fulfill the entropy inequality for the Euler equations. 
To this end, we follow the approach presented by Abgrall \cite{abgrall2018general} and add an entropy correction term to our steady-state residual. Therefore, we apply that $\eta:\R^I \to\R$ is a strict convex entropy with the corresponding entropy flux 
 $\bbg:\R^{d+2}\to \R^{d}$. The entropy variable is $\partial_\bU \eta(\bU)=\mathbf{V} \in \R^{d+2}$ such that $\langle \eta'(\bU),\bbf'(\bU)\rangle = \bbg'(\bU)$, cf. \cite{harten1983symmetric}. With $\mathbf{V}^h\in V^h$, we denote the approximated entropy variable. 
The entropy equality in the conservative case using the RD framework reads
\begin{equation}\label{eq:entropy_condition}
\sum_{\sigma\in K } <\mathbf{V}_\sigma;\tildResKs> =
\int_{\partial K} \bbg \left(\mathbf{V}^h \right)\cdot \bn \diff \gamma,
\end{equation}
where $\tildResKs$ is a modification of the previously presented residuals.
Since \eqref{eq:entropy_condition} is not fulfilled for general $\ResKs$,
the entropy correction terms $ r_\sigma^K$ is added to the residuals $\ResKs$ to guarantee \eqref{eq:entropy_condition}.
In addition, we have to select these correction terms
such that they do not violate the conservation relation. We introduce the following definition of the entropy-corrected residuals
\begin{equation}\label{eq:residual_corr}
\tildResKs= \ResKs+r_\sigma^K
\end{equation}
with the goal of fulfilling the discrete entropy condition \eqref{eq:entropy_condition}.
In \cite{abgrall2018general}, the following correction terms are presented
\begin{align}
 \label{eq:correction}
 r_\sigma^K := \alpha_K(\mathbf{V}_\sigma -\mean{\mathbf{V}} ),
 \quad \text{with }
\mean{\mathbf{V}} = \frac{1}{N_K} \sum_{\sigma \in K } \mathbf{V}_\sigma,
 \\
 \label{eq:error_abgrall}
 \alpha_K = \frac{E}{\sum\limits_{\sigma \in K } \left(\mathbf{V}_\sigma -\mean{\mathbf{V}} \right)^2},
 \quad
 E:= \int_{\partial K} \bbg \left(\mathbf{V}^h \right) \cdot\bn \diff \gamma-
 \sum_{\sigma\in K }<\mathbf{V}_\sigma;\ResKs> ,
\end{align}
where $N_K$ denotes the number of DOFs belonging to $K$.
 By adding \eqref{eq:correction} to the residual $\ResKs$,
 the resulting scheme using $\tildResKs$ is locally conservative in $\bU$ and
 entropy conservative.
 However, entropy conservation is most of the time not enough for the Euler equations of gas dynamics since the presence of  
discontinuities (i.a. shocks), the scheme should not just fulfill the equality \eqref{eq:entropy_condition} but rather an inequality 
\begin{equation}\label{eq:entropy_condition_second}
\sum_{\sigma\in K }<\mathbf{V}_\sigma;\hResKs> \geq
\int_{\partial K} \bbg \left(\mathbf{V}^h\right)\cdot \bn \diff \gamma.
\end{equation}
 To obtain a semi-discrete entropy dissipative scheme in the continuous FE case, 
 we apply the previous construction and write the new residual as 
 \begin{equation}\label{eq:residual_corr_diss}
\hResKs= \ResKs+r_\sigma^K+\Psi_\sigma^K,
\end{equation}
 where $r_\sigma^k$ are defined by \eqref{eq:correction}
and jump 
diffusion $\Psi_\sigma^K$, defined similarly to \eqref{eq_Galerkin_jump}, by 
\begin{equation}\label{eq:jumpStabilization}
\Psi_\sigma^K := \lambda h_K^2 \int_{\partial K} \jump{\nabla \phi_\sigma} \cdot \jump{\nabla \mathbf{V}^h} \diff \gamma,
\end{equation}
which ensures that 
\begin{equation}\label{eq_ensure}
 \sum_{\sigma \in K} \est{\mathbf{V}_\sigma, \Psi_\sigma^K} =  
 \lambda h_K^2 \int_{\partial K} \jump{\nabla \mathbf{V}^h}^2 \diff \gamma \geq 0
\end{equation}
for any $\lambda>0$ and so the strict inequality in \eqref{eq:entropy_condition_second}.
In the discontinuous case we use instead of the gradients directly the jumps of the quantities (a local Lax-Friedrich dissipation term). 
 For entropy dissipative RD schemes \eqref{eq:residual_corr_diss}  a weak BV estimation for the Euler equations  is proven later and used in the consistency estimation, see Section \ref{se:Consistency}.

\subsection*{Extension to Unsteady Flows.}\label{se:Correction}

RD can be seen as an arbitrarily high-order FE discretization developed for steady-state problems, 
the generalization to time-dependent problems should not be done via a classical method of lines approach since 
 that would require the inversion of the mass matrix (not desirable in continuous FE) and decrease the order. First, some correction terms \cite{ricchiuto2010explicit} in the RK setting where proposed, later 
 the deferred correction (DeC) method \cite{abgrall2017high} have been used to overcome those issues.
The RD scheme for time-dependent problems reads than in semidiscrete form 
\begin{equation}\label{semi_unsteady}
|C_{\sigma}| \partial_t \bU_\sigma(t) = -\sum_{K| \sigma \in K} \ResKs (\bU_{\sigma} (t)),
\end{equation}
where $ \ResKs$ is our element residual and $C_{\sigma}$ denotes the dual control volume associated with the DOF $\sigma$.

\subsection*{Basic Properties of RD schemes and the MOOD Extension.}
Due to the entropy correction terms from Subsection \ref{se:Correction} we can ensure  locally the entropy inequality
for all of our mentioned RD schemes from Example \ref{Various_RD} to be fulfilled. However, we need further properties for the convergence and consistency analysis, e.g. the positivity of pressure and density inside the numerical calculations. 
To this end we use as well the \textbf{a posteriori multidimensional optimal order detection} (MOOD) method proposed in \cite{clain2011high}. It was further extended and applied in the RD context in \cite{bacigaluppi2019posteriori}. The main idea of MOOD is the following, one starts with any high-order method and calculate the solution at every degree of freedom in the next time step. Here, the solutions are checked by several criteria, e.g. the positivity of density and pressure for the Euler equations. If the solution passes all criteria the solution is accepted if one (or several criteria) are not fulfilled the solution at this DOF will be rejected. The element is marked and one calculates the solution inside the element again using a scheme with more favourable properties (classically more dissipation)
and the same procedure begins. In the end, we have a so-called parachute scheme with the best properties (most dissipation) which yield the desired result. A sketch of the procedure is exemplary given in Figure \ref{fig_Mood}. The LxF scheme \eqref{eq_LxF_residual} is working in our case always as the parachute scheme, therefore it would be enough to repeat the basic properties of this scheme only. However, we also consider other RD methods. We repeat the basic properties of all considered RD schemes and name the specific properties for some of them. These properties are also related to the detection criterias. Here, we follow the approach from \cite{bacigaluppi2019posteriori} and repeat the algorithm. For details, we 
refer to \cite{bacigaluppi2019posteriori}. 
\begin{figure}[tb]
	\center
		\includegraphics[width=0.8\textwidth]{%
   		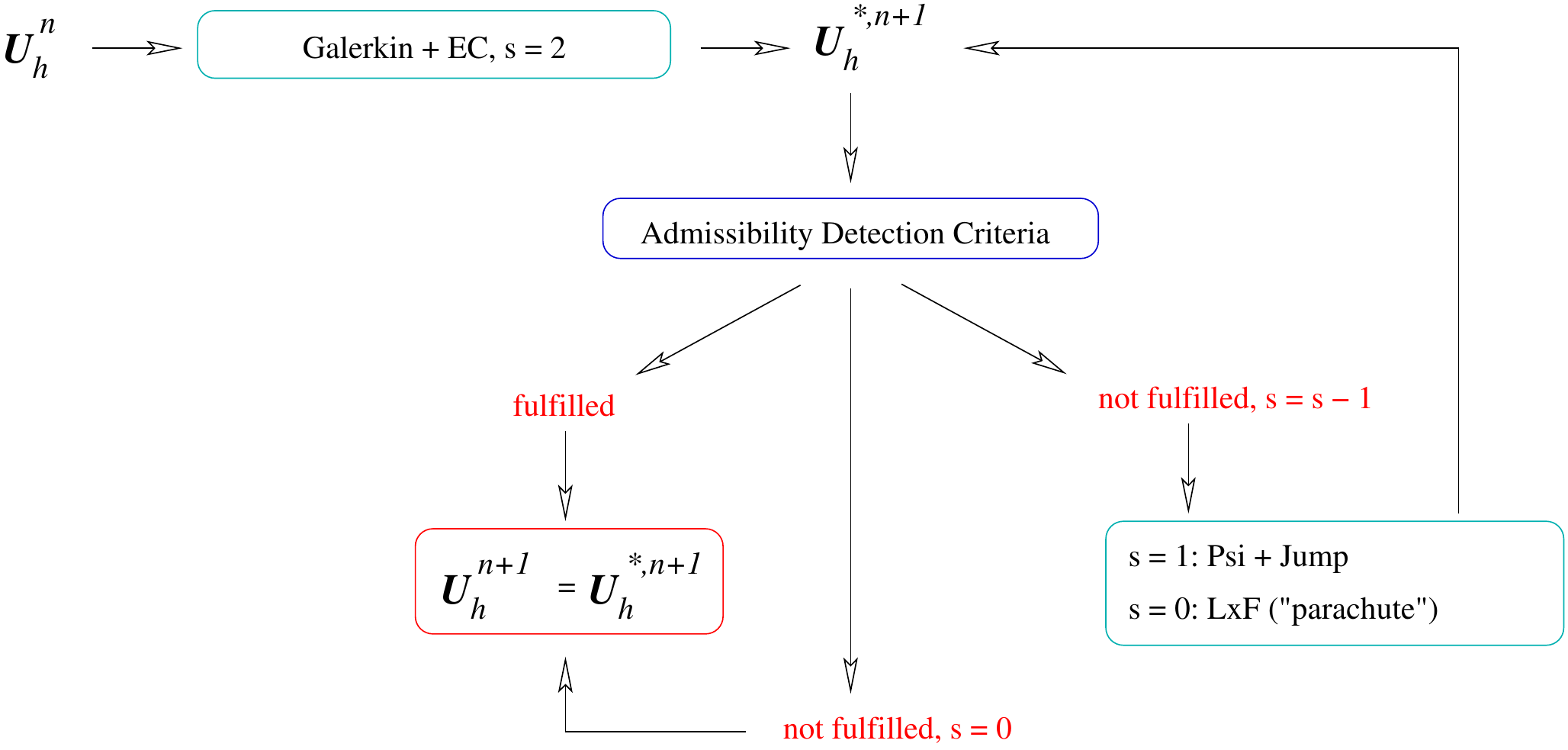} 
 	\caption{MOOD procedure with Galerkin and entropy correction, Psi+ jump stabilization and LxF-RD schemes}
 	\label{fig_Mood}
\end{figure}  
Before we start we require  the following assumption on the numerical approximation 
$\rho^h(t), \bm^h(t):=\rho^h(t) \bu^h(t), E^h(t) \in V^h$ of $\rho(t), \bm(t), E(t)$
obtained by our RD scheme \eqref{semi_unsteady}. 
 \begin{assumption}\label{assumtion_1}
 We assume that there exist two positive constants $\underline{\rho}$ and $\overline{E}$ such that
\begin{equation}\label{vacuum}
\rho^h(t) \geq\underline{\rho} >0 \qquad E^h(t) \leq \overline{E} \text{ uniformly for } h\to 0.
\end{equation}
 \end{assumption}
The physical meaning of this assumption is that no vacuum appears. 
The second assumption \eqref{vacuum} implies then that the speed $|\bu^h|$ is bounded since 
$
 |\bu^h|^2\leq \frac{2E^h}{\rho^h} \leq \frac{2\overline{E}}{\underline{\rho}}<C.
$
As it is described in  \cite{lukavcova2021convergence,feireisl2019convergence}, Assumption \ref{assumtion_1} implies that the density is also bounded from above and the energy is bounded from below. Consequently, the pressure and temperature are bounded from above and below as well. 

\subsubsection*{Conservation}
 
 The main feature of RD is the fact that it is interpreted in fluctuation splitting form which is related to the conservative form used in classical finite volume/finite difference schemes. All of the RD schemes are locally conservative as long as condition \eqref{eq:residual_conservation} holds. Since it is working locally, the conservation is also ensured even if the spatial 
 discretization scheme differs between two neighboring elements which are essential important if the MOOD procedure is applied.

Actually, Assumption \ref{H2} is  important to prove the Lax-Wendroff  theorem  \cite{abgrall2003high}.
We need this assumption as well inside our consistency analysis.
One should see this assumption as asserting continuity of the 
residual components (or signals) $\Phi_\sigma^{K}$ with respect to the 
nodal values of $\bU$; in particular, when $\bU$ is constant, 
$\Phi_\sigma^{K}=0$. Note that the proof of the Lax-Wendroff theorem when  
$\Phi_{\sigma}^{K}$ satisfies Assumption \ref{H2} is still valid if the number of arguments in $\Phi_\sigma^{K}$
is bounded independently of $h$ and the element $K$. In practice,
 this is always true if the triangulation
is uniform, since the arguments of $\Phi_\sigma^{K}$ are contained in some
neighborhood of $\sigma$ comprising a finite number of points.
Finally, we like to point out that using the entropy correction term does not affect those results due to its conservation property, i.e. $\sum\limits_{\sigma \in K} r_{\sigma}^K=0$.

\subsubsection*{High-order accuracy for smooth solutions}
There exist plenty of papers in the literature focusing on the high-order accuracy properties of RD schemes for steady and unsteady flows, cf. \cite{hubbard2011discontinuous,  abgrall2017high, abgrall2011construction_2} in case a smooth solution is approximated. Here, we want to point out that due to the analysis of 
\cite{abgrall2018general, offner2020stability} the application of entropy correction terms does not affect this property if 
sufficiently accurate  quadratures formulas are used. Roughly speaking, 
for a sufficiently smooth solution, we obtain an $h^{p+1}$ order accurate approximation both in space in time using RD together with the DeC approach \cite{abgrall2017high}. 


\subsubsection*{Positivity Preservation}
Not all of the residuals named in Example \ref{Various_RD} can ensure to keep the density and pressure (internal energy) positive for the Euler equations \eqref{eq_Euler_conservation}. However, at least in our parachute scheme, the LxF residual \eqref{eq_LxF_residual} should provide this property. In  Appendix \ref{positivity}, we demonstrate that this is true under a certain CFL condition for both the time explicit  and implicit RD schemes. 

\subsubsection*{Detection Procedure}

The detection procedure is essential and besides the positivity of density, other points are checked. We base our detections on physical/modelling and numerical considerations. The algorithm procedure is the following from \cite{bacigaluppi2019posteriori}:
\begin{itemize}
\item Physical Admissibility Detection: The density and pressure at every degree of freedom have to remain positive. 
\item Computational Admissibility Detection: The numerical solution can not be undefined (Not-A-Number or Infinity). 
\item Plateau Detection: If we are on a plateau, we make sure that we do not break that area. 
\item Numerical Admissibility Detection: The solution is tested against oscillatory behaviour. Here, we test for a relaxed discrete maximum principle. If the criteria are activated, we test for smoothness meaning that we have natural oscillations inside the calculations. 
\end{itemize}
Using the MOOD procedure, we can start with any RD scheme and fall back, in the worst case scenario, to the parachute scheme. \remi{Here, the parachute scheme is the local Lax-Friedrich (or Rusanov distributions). Its properties (positivity, entropy dissipation) are recalled in the Appendix \ref{appendix}. }Due to the finite number of cells and loops, the procedure converges.
In the following part, we use the MOOD procedure to ensure the positivity of density and pressure at every degree of freedom. 


\section{Consistency Analysis}\label{se:Consistency}

We investigate the consistency of our semi-discrete RD scheme \eqref{semi_unsteady}.
We will show that for the  numerical solution  $\bU^h=(\rho^h, \bm^h, E^h)$ calculated by semi-discrete RD  
\begin{equation}\label{equ:consistent}
 \left[\int_{\Omega} \bU^h \cdot \varphi\diff \bx \right]_{t=0}^{t=\tau}=\int_0^\tau \int_{\Omega}  \partial_t \varphi \cdot  \bU^h + \bbf(\bU^h): \nabla_\bx \varphi \diff \bx \diff t +\int_0^\tau \mathbf{e}^h(t,\varphi) \diff t
\end{equation}
holds for all $\varphi \in C^{p+1}( [0,T]\otimes \overline{\Omega}, \R^{2+d})$ where the error  $\mathbf{e}^h\to 0$ if $h\to 0$. Next, we  specify  $\mathbf{e}^h$ and demonstrate how we can ensure \eqref{equ:consistent}. 
Note that the consistency of the total energy, cf. Definition \ref{def_dmv} (energy inequality), follows from the global conservation property of the scheme. Therefore, we restrict ourself in the following investigation on the density $\rho$ and momentum $\mathbf{m}$. However, to demonstrate the consistency estimation, we need a weak BV estimate where we focus on the entropy behavior. Therefore, we will describe as well the error of the entropy inequality. We derive finally the consistency formulation for   $\CU^h=(\rho^h, \bm^h, \eta^h)$. 
First, we realize that  for all $\varphi \in C^{p+1}( [0,T]\times \overline{\Omega}, \R^{2+d})$
\begin{equation}\label{eq_consistent_2}
  \left[\int_{\Omega} \bU^h  \varphi\diff \bx \right]_{t=0}^{t=\tau}
  = \int_0^\tau \int_{\Omega} \frac{\diff{} }{\diff t} \left( \bU^h \varphi\right)\diff \bx \diff t 
  = \int_0^\tau \int_{\Omega} \bU^h \partial_t \varphi +  \varphi \partial_t \bU^h   \diff \bx\diff t
\end{equation}
and for the last term, we have to   apply the RD scheme in the semi-discrete setting \eqref{semi_unsteady}  for $\partial_t \bU^h$ and derive the error which we have to estimate.

 \subsection{Consistency Errors}

\remi{The consistency analysis is independent of the choice of the residual. The only thing that matter, is that for the whole series of schemes used in the MOOD procedure, each scheme has the same total residual on each element: this guaranties local conservation. The choice of the residual will play a role in the entropy dissipation structure, cf. Section \ref{weakBV}.}

\medskip

For any $\varphi\in C^{p+1}( [0,T]\times \overline{\Omega}, \R^{2+d})$, $\Pi_h \varphi$ will be
$$\Pi_h\varphi=\sum\limits_\sigma\varphi_\sigma \phi_\sigma$$
its approximation in $V^h$, and $t\in [t_n, t_{n+1}[$
$$\tilde{\varphi}=\sum\limits_\sigma \varphi_\sigma^n 1_{C_\sigma}.$$
 We can rewrite  as 
$$
\left[\int_{\Omega} \bU^h  \varphi\diff \bx \right]_{t=0}^{t=\tau}=\left[\int_{\Omega} \bU^h  \big (\varphi-\tilde{\varphi}\big )\diff \bx \right]_{t=0}^{t=\tau}+\left[\int_{\Omega} \bU^h  \tilde{\varphi}\diff \bx \right]_{t=0}^{t=\tau},$$
and, since $|C_\sigma|=\int_\Omega\phi_\sigma$, under $h=O(\Delta t)$, we get
$$\left \vert \left[\int_{\Omega} \bU^h  \big (\varphi-\tilde{\varphi}\big )\diff \bx \right]_{t=0}^{t=\tau}\right \vert \leq C h\max_{t\in [0,\tau]}\Vert \bU^h\Vert_{L^1}.$$
Then, we write
$$\left[\int_{\Omega} \bU^h  \tilde{\varphi}\diff \bx \right]_{t=0}^{t=\tau}= \int_\Omega \tilde{\varphi}_t\cdot \bU^h+\tilde{\varphi}\partial_t \bU^h \diff \bx .$$ 
We have
\begin{equation*}
\begin{split}
\int_\Omega\tilde{\varphi}\partial_t \bU^h \diff \bx &=\sum\limits_\sigma \vert C_\sigma \vert \tilde{\varphi}_\sigma \partial_t \big (\bU^h_\sigma \big)=
-\sum_K\sum\limits_{\sigma\in K} \tilde{\varphi}_\sigma \Phi_\sigma^K(\bU^h)\\
&=-\sum_K \sum\limits_{\sigma\in K} \tilde{\varphi}_\sigma\bigg ( -\int_K \nabla\phi_\sigma\bbf(\bU^h)\diff \bx+
\int_{\partial K} \phi_\sigma \bbfnum (\bU^{h,K}, \bU^{h,K^+})\diff \gamma\\
&\qquad +\sum_K\sum_{\sigma\in K} \tilde{\varphi}_\sigma \big ( \Phi_\sigma^K(\bU^h)-\Phi_\sigma^{K, Gal}(\bU^h)\big ),
\end{split}
\end{equation*}
where we have set
$$\Phi_\sigma^{K, Gal}(\bU^h)=-\int_K \nabla\phi_\sigma\bbf(\bU^h)\diff \bx+
\int_{\partial K} \phi_\sigma \bbfnum (\bU^{h,K}, \bU^{h,K^+})\diff \gamma.$$
{Since $\sum_{\sigma\in K}\Phi_\sigma^K(\bU^h)=\sum_{\sigma\in K}\Phi_\sigma^{K, Gal}(\bU^h)$,  and using the conditions at the boundary (periodic or no-flux boundary), we can rewrite this as }
\begin{equation*}
\begin{split}
\int_\Omega\tilde{\varphi}\partial_t \bU^h \diff \bx &=-\int_\Omega \nabla\Pi_h\varphi \cdot \bbf(\bU^h)\diff \bx+
\sum_K\sum_{\sigma, \sigma'\in K}\big ( \varphi_\sigma-\varphi_{\sigma'}\big ) \big (\Phi_\sigma^K(\bU^h)-\Phi_{\sigma}^{K, Gal}(\bU^h)\big ).
\end{split}
\end{equation*}

Collecting all the pieces together, we get \eqref{equ:consistent} with
\begin{equation}\label{eq:error_eh}
\mathbf{e}^h(t,\varphi)=\underbrace{\sum_K\sum_{\sigma, \sigma'\in K}\big ( \varphi_\sigma-\varphi_{\sigma'}\big ) \big (\Phi_\sigma(\bU^h)-\Phi_{\sigma}^{K, Gal}(\bU^h)\big )}_{(I)}+\underbrace{\bigg [\int_\Omega \bU^h\big (\varphi-\tilde{\varphi}\big ) \diff\bx \bigg ]_{t=0}^{t=\tau}}_{(II)}+\underbrace{\int_\Omega \big ( \Pi_h\varphi-\varphi\big ) : \bbf(\bU^h)\diff \bx}_{(III)}.
\end{equation}
We have already seen that for a bounded sequence $\bU^h$ the terms $(II)$ and $(III)$ will converge to $0$, so the only thing to study is the behavior of the term $(I)$. Term $(I)$ will be discussed in the following part where we demonstrate that this term tends  to zero. Therefore, we need a weak BV estimation derived from the entropy inequality.

\subsection{Weak BV estimates for entropy dissipative RD schemes}\label{weakBV}
For the consistency estimation of the entropy inequality and to estimate term $(I)$, we need additionally to demonstrate 
the weak BV estimation for our RD scheme \eqref{semi_unsteady}.  Before, we 
note that  Assumption \ref{assumtion_1}
is related to the mathematical entropy function \eqref{eq:entropy_convex}  as demonstrated in 
 \cite[Lemma 3.1 and B2]{lukavcova2021convergence}, it is equivalent to the strict convexity of the mathematical entropy function \eqref{eq:entropy_convex}, i.e.
 \begin{equation} \label{eq:hessian}
 \exists \underline{\eta}_0 >0: \frac{\diff{}^2\eta(\bU^h) }{\diff{\bU}^2} \geq \underline{\eta}_0 \mathcal{I}
 \end{equation}
 where $\mathcal{I}$ is a unity  matrix.
 
 Now, we start with our semi-discrete entropy RD scheme \eqref{eq:residual_corr_diss}. We recall its construction. Starting for a family of residuals $\Phi_\sigma^K(\bU^h)$ that satisfies the conservation relations
 $$\sum\limits_{\sigma\in K} \Phi_\sigma^K(\CU^h)=\int_{\partial K} \bbfnum(\bU^{h,K}, \bU^{h, K^+}, \bn)\diff \gamma.$$
 By introducing a correction of the form $\alpha_K \big ( \bV_\sigma-\overline{\bV}_K\big )$, we can choose $\alpha_K$ such that $\Psi_\sigma^K(\bU^h)=\Phi_\sigma^K(\bU^h)+\alpha_K \big ( \bV_\sigma-\overline{\bV}_K\big )$ satisfies in each element
 $$\sum\limits_{\sigma\in K} \Psi_\sigma^K(\bU^h)=\int_{\partial K} \bbfnum(\bU^{h,K}, \bU^{h, K^+}, \bn)\diff \gamma,$$
 and
 $$\sum\limits_{\sigma\in K} \langle\bV_\sigma, \Psi_\sigma^K(\bU^h)\rangle=\int_{\partial K} \bbgnum(\bU^{h,K}, \bU^{h, K^+}, \bn)\diff \gamma,$$
 where the numerical flux $\bbgnum$ is consistent with the entropy flux $\bbg$.
 Then we modify again the residuals and define\footnote{If we work with discontinuous FE like DG or FR, we can apply a local Lax-Friedrich diffusion term in \eqref{eq_residual} instead of the gradient jumps resulting in analogous results.}
\begin{equation}\label{eq_residual}
\Theta_\sigma^K(\bU)=\Psi_\sigma^K+\myrho h_K^{\zeta}\int_{\partial K} \jump{\nabla \phi_\sigma} \cdot \jump{\nabla \bV^h} \diff \gamma
\end{equation}
with $\zeta\geq 2$. 
 Here, $\myrho>0$ depends on the maximum of the eigenvalues of the Jacobian matrices. 
 This residual still satisfies the conservation requirement and  we have
 $$\sum\limits_{\sigma\in K}\langle \bV_\sigma, \Theta_\sigma^K(\bU^h)\rangle=\int_{\partial K} \bbgnum(\bU^{h,K}, \bU^{h, K^+}, \bn)\diff \gamma+\myrho h_K^{\zeta}\int_{\partial K} \norm{ \jump{\nabla \bV^h}}^2 \diff \gamma,$$
where $\norm{\cdot}$ denotes the Euclidian norm in the following. \remi{In the case of the parachute scheme, the entropy production is built in into the residual, and we have a similar formula with $\zeta=2$, this is why $\zeta=2$ is the factor of choice.}

 Using this, and proceeding as before, for any positive test function $\varphi\in C^{p+1}( [0,T]\times \overline{\Omega}, \R)$, we get
 \begin{equation}\label{equ:entropyconsistent}
 \left[\int_{\Omega} \eta^h
 \varphi\diff \bx \right]_{t=0}^{t=\tau}=\int_0^\tau \int_{\Omega}  \partial_t \varphi \cdot  \eta^h 
 + \bbg(\bU^h): \nabla_\bx \varphi \diff \bx \diff t +\int_0^\tau \mathbf{e_\eta}^h(t,\varphi) \diff t.
\end{equation}
Setting $\Xi^K_\sigma(\bU^h)=\langle \bV_\sigma,\Theta^K_\sigma(\bU^h)\rangle$ and $\Xi_\sigma^{K,Gal}=\langle \bV_\sigma,\Theta^{K,Gal}_\sigma(\bU^h)\rangle$ to simplify the notations, we have 
\begin{equation}\label{eq_help_estimation}
\begin{split}
\mathbf{e_\eta}^h(t,\varphi)&=\underbrace{\sum_K\sum_{\sigma, \sigma'\in K}\big ( \varphi_\sigma-\varphi_{\sigma'}\big ) \big (\Xi_\sigma(\bU^h)-\Xi_{\sigma}^{K, Gal}(\bU^h)\big )}_{(I)}+\underbrace{\bigg [\int_\Omega \bV^h\big (\varphi-\tilde{\varphi}\big ) \diff\bx \bigg ]_{t=0}^{t=\tau}}_{(II)}+\underbrace{\int_\Omega \big ( \Pi_h\varphi-\varphi\big ) : \bbg(\bU^h)\diff \bx}_{(III)} \\&+ 
\underbrace{\sum\limits_K \myrho h_K^{\zeta}\int_{\partial K}  \norm{ \jump{\nabla \bV^h}}^2 \diff \gamma}_{(IV)}.
\end{split}
\end{equation}
In particular, taking $\varphi=1$, we obtain
\begin{equation}\label{eq:weakBV}
\left[\int_{\Omega} \eta^h\diff\bx
  \right]_{t=0}^{t=\tau}+\sum\limits_{e\in \mathcal{E}} \myrho h_K^{\zeta}\int_e\norm{ \jump{ \nabla \bV^h} }^2 \diff \gamma=0
\end{equation}

Let us define
$$\norm{\Vert \bU^h\Vert}^2 :=\sum\limits_{e\in \mathcal{E}}  h_K^{\zeta}d\int_e \norm{\jump{\nabla \bV^h}}^2 \diff \gamma. $$
This is a norm: If $\norm{\Vert \bU^h\Vert}=0$, this means that across any edge, $\jump{\nabla \bV^h}=0$, i.e. $\bV^h$ is a given polynomial  over $\R^{d}$, and because it is compactly supported, $\bU^h=0$. We will have under the assumptions of boundedness from above and below (for the density) that
$$\Vert\Vert \bU^h\Vert\Vert\leq C(\bU^h(0))\leq C' \int_\Omega \vert \bV(\bx, 0)\vert \diff \bx.$$

We have, for any $C$ and $C'$, 
$$\int_e \norm{ \jump{ \bV^h}}^2 \diff \gamma=\int_e \Vert \nabla (\bV^h-C)_{|K}-\nabla(\bV^h-C')_{|K'}\Vert^2.$$
Since the  number of degrees of freedom in $K$ and $K'$ is bounded, and because the mesh is regular, there exists $\alpha$ and $\beta$ independent of $K$ and $K'$ such that
\begin{equation}\label{weakBV:norm}\alpha h_K^{d+\zeta-3} \sum\limits_{\sigma, \sigma'\in K\bU \cup K'}\norm{\bV_\sigma-\bV_{\sigma'}}^2 \leq 
h_K^{\zeta} \int_e \norm{ \jump{ \nabla \bV^h}}^2 \diff \gamma \leq \beta h_K^{d+\zeta-3} \sum\limits_{\sigma, \sigma'\in K \cup K'}\Vert \bV_\sigma-\bV_{\sigma'}\Vert ^2.
\end{equation}
Note that $d+\zeta-3\geq d-1$.
In the end, the norm $\norm{\Vert~.~\Vert}$ and
$$\sum_{K} \sum_{\sigma\in K} h_K^{\frac{d+\zeta-3}{2}}\sum_{\sigma_1, \sigma_2\in \mathcal{S}_\sigma}\Vert \bU_{\sigma_1}-\bU_{\sigma_2}\Vert $$
are equivalent, because for $\sigma\in K$, $\mathcal{S}_\sigma$ is contained in the set of DOFs in $K$, and those of the neighbouring elements to $K$. We recall that  $\myrho$ depends on the maximum of $\bU^h$ over the mesh which is bounded.
We point out that \eqref{eq:weakBV} together with \eqref{weakBV:norm}  yields the BV estimate, namely
$$
\sum_{K}\sum_{\sigma \in K}   h_K^{\frac{d+\zeta-3}{2}}\sum_{\sigma_1, \sigma_2\in \mathcal{S}_\sigma}\Vert \bU_{\sigma_1}-\bU_{\sigma_2}\Vert \leq C. 
$$

Combining \eqref{weakBV:norm} with the Assumption \ref{H2} on the residuals, the relation \eqref{eq:error_eh} and in particular its term $(I)$,  since  $\vert \varphi_\sigma-\varphi_{\sigma'}\vert \leq C \Vert \nabla \varphi\Vert_{\infty} h$, we see that
$$\Vert \mathbf{e}_\eta^h (t, \varphi)\Vert \leq C h^{d-1+1-\frac{d+\zeta-3}{2}}=Ch^{d-\frac{d+\zeta-3}{2}}\rightarrow 0$$
when $h\rightarrow 0$\remi{, as soon as $\zeta\geq 2$.}
Here $(I)$ corresponds to the first term in \eqref{eq:error_eh} as well as in \eqref{eq_help_estimation}.
 The constant $C$ only depends on the $L^\infty$ bound on the numerical solution, cf. Assumption \ref{assumtion_1}, where the  constant $C$ from Assumption \ref{H2}  depends only on the geometrical regularity of the mesh.

\bigskip

In total, we have shown the following results:

\begin{theorem}[Consistency Formulation]\label{th_consistency}
 Let $\bU^h$ be a solution of the RD scheme with the MOOD approach on the interval $[0,T]$ with the initial data $\bU^h_0$. Under our assumptions \ref{H0}, \ref{H2}, and  \ref{assumtion_1} we have the following results for 
 all $\tau \in (0,T]$: 
 \begin{itemize}
  \item for all $\varphi \in C^{p+1}([0,T] \times \overline{\Omega})$:
  \begin{equation}\label{eq:consistency_rho}
 \left[  \int_{\Omega} \rho^h \varphi \diff \bx \right]_{t=0}^{t=\tau} =\int_0^\tau \int_{\Omega} \rho_h \partial_t \varphi +\bm^h \cdot \nabla_{\bx} \varphi \diff \bx \diff t
 +\int_0^\tau e_{\rho^h} (t,\varphi) \diff t;
  \end{equation}

  \item for all $\varphi \in C^{p+1}([0,T] \times \overline{\Omega};\R^d)$:
  \begin{equation}\label{eq:consistency_m}
 \left[  \int_{\Omega} \bm^h \mathbf{\varphi} \diff \bx \right]_{t=0}^{t=\tau} =\int_0^\tau \int_{\Omega} \bm^h \partial_t \mathbf{\varphi} +\frac{\bm^h\otimes\bm^h}{\rho^h} :  \nabla_{\bx}\mathbf{\varphi}  +p^h \div_{\bx} \mathbf{\varphi} \diff \bx \diff t + \int_0^\tau e_{\bm^h} (t,\mathbf{\varphi}) \diff t;
  \end{equation}
  
  \item for all $\varphi \in C^{p+1}([0,T] \times \overline{\Omega}), \; \varphi\geq 0$:
  \begin{equation}\label{eq:consistency_eta}
 \left[  \int_{\Omega} \eta^h \varphi \diff \bx \right]_{t=0}^{t=\tau} \leq \int_0^\tau \int_{\Omega} \eta^h \partial_t \varphi + (\eta^h \bu^h) \cdot \nabla_\bx \mathbf{\varphi} \diff \bx \diff t + \int_0^\tau e_{\eta^h} (t,\varphi) \diff t;
  \end{equation}
  \item
  $ \int_{\Omega} E^h(\tau)\diff \bx =\int_{\Omega} E^h_0 \diff \bx$
  \item The errors  $e_{j^h}$, $(j=\rho, \bm, \eta)$ tend to zero under mesh refinement 
  \begin{equation}\label{eq:consistency_error}
   \norm{e_{j^h}}_{L^1(0,T)} \to 0 \text{ if } h\to 0. 
  \end{equation}
 \end{itemize}
\end{theorem}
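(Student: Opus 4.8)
The plan is to assemble the ingredients already prepared rather than to argue from scratch: the general consistency identity \eqref{equ:consistent} together with its error splitting \eqref{eq:error_eh}, its entropy counterpart \eqref{equ:entropyconsistent}--\eqref{eq_help_estimation}, and the weak BV machinery of \eqref{eq:weakBV}--\eqref{weakBV:norm} already contain everything needed. The theorem is then obtained by reading these identities off component by component and by checking that each error term vanishes as $h\to0$.

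First I would treat the density \eqref{eq:consistency_rho} and momentum \eqref{eq:consistency_m} statements in parallel, specializing the vector identity \eqref{equ:consistent} to the corresponding components of $\bU^h=(\rho^h,\bm^h,E^h)$ and inserting the explicit fluxes $\bbf$. For the first component the flux reduces to $\bm^h$, producing the transport term $\bm^h\cdot\nabla_\bx\varphi$; for the momentum block it produces $\frac{\bm^h\otimes\bm^h}{\rho^h}:\nabla_\bx\varphi$ together with the pressure term $p^h\,\div_\bx\varphi$. The errors $e_{\rho^h}$ and $e_{\bm^h}$ are precisely the matching components of $\mathbf{e}^h$ from \eqref{eq:error_eh}. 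Here I would emphasize that the MOOD procedure and Assumption \ref{assumtion_1} are what guarantee $\rho^h\geq\underline\rho>0$ and $|\bu^h|$ bounded, so that $\frac{\bm^h\otimes\bm^h}{\rho^h}$ and $p^h$ are well defined and uniformly bounded and no vacuum corrupts the fluxes.

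For the entropy statement \eqref{eq:consistency_eta} I would instead start from \eqref{equ:entropyconsistent}, built with the modified residual $\Theta_\sigma^K$ of \eqref{eq_residual}. The key point is that its error \eqref{eq_help_estimation} carries, beyond the three terms analogous to \eqref{eq:error_eh}, the manifestly nonnegative diffusion contribution $(IV)=\sum_K\myrho h_K^\zeta\int_{\partial K}\norm{\jump{\nabla\bV^h}}^2\diff\gamma$. Since $\varphi\geq0$, moving $(IV)$ to the left-hand side turns the identity into the claimed inequality, with $e_{\eta^h}$ gathering the remaining three terms. The energy conservation $\int_\Omega E^h(\tau)\diff\bx=\int_\Omega E^h_0\diff\bx$ I would read directly off \eqref{equ:consistent} for the energy component with $\varphi\equiv1$: the constant test function annihilates the bulk integral and all of $\mathbf{e}^h$, because $(I)$ and $(II)$ vanish for constant $\varphi$ and $(III)$ vanishes by the partition of unity $\sum_\sigma\phi_\sigma=1$.

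The main obstacle is the quantitative claim \eqref{eq:consistency_error}. Terms $(II)$ and $(III)$ are controlled by standard interpolation bounds on $\varphi-\tilde\varphi$ and $\Pi_h\varphi-\varphi$, already seen to be $O(h)$ for bounded $\bU^h$. The delicate term is $(I)$. Here I would invoke Assumption \ref{H2} to bound each signal difference by $h^{d-1}\sum_{\sigma'\in\mathcal{S}_\sigma}\norm{\bU_{\sigma'}-\bU_\sigma}$, use $|\varphi_\sigma-\varphi_{\sigma'}|\leq C\norm{\nabla\varphi}_{\infty}h$, and then estimate the resulting sum of nodal increments through the weak BV bound \eqref{eq:weakBV} via the norm equivalence \eqref{weakBV:norm}. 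Counting the exponents of $h$ yields $\norm{\mathbf{e}^h_\eta(t,\varphi)}\leq Ch^{\,d-\frac{d+\zeta-3}{2}}$, positive exactly when $\zeta\geq2$, and integrating in time gives \eqref{eq:consistency_error}. I expect the careful bookkeeping of the $h$-powers in \eqref{weakBV:norm} together with the uniformity of $C$ (which rests on the $L^\infty$ bound from Assumption \ref{assumtion_1} and the mesh regularity of Assumptions \ref{H0}--\ref{H2}) to be where the real care is required.
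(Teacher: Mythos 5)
Your proposal is correct and follows essentially the same route as the paper: it assembles the identical ingredients---the consistency identity \eqref{equ:consistent} with the splitting \eqref{eq:error_eh}, the entropy-corrected residuals and \eqref{equ:entropyconsistent}--\eqref{eq_help_estimation}, the weak BV estimate \eqref{eq:weakBV} obtained by taking $\varphi=1$, Assumption \ref{H2}, the bound $|\varphi_\sigma-\varphi_{\sigma'}|\leq C\norm{\nabla\varphi}_\infty h$, and the norm equivalence \eqref{weakBV:norm}---in the same order and with the same exponent bookkeeping. Your explicit step of discarding the sign-definite dissipation term $(IV)$ (using $\varphi\geq 0$) so that $e_{\eta^h}$ contains only the three vanishing terms is precisely what the paper does implicitly, and is indeed required for \eqref{eq:consistency_error} to hold for $j=\eta$.
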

  
  \begin{remark}[MOOD and LxF-Residual]
Due to our assumptions, we have proven that all of our considered schemes are consistent. However, it is known that some of those schemes will have stability issues and violate the positivity of pressure and density  inside our numerical simulations. To overcome this issue, we use the MOOD approach locally. As our parachute scheme, we apply the LxF residual which has the highest amount of dissipation and acts around the shock analogously to the local Lax-Friedrich schemes where convergence for dissipative weak solutions has been proven in  \cite{feireisl2019convergence} also for the fully-discrete case.
 \end{remark}

\section{Convergence to dissipative weak solutions}\label{se:convergence}

We have  demonstrated that the RD methods yield a consistent approximation for the Euler equations \eqref{eq_Euler_conservation} Theorem \ref{th_consistency}. Due to the MOOD approach
we can also ensure that the underlying physical laws, e.g. positivity of density and pressure, are fulfilled. 
At all, we demonstrated that our final implemented scheme is high-order, structure preserving and consistent. 
These are exactly the properties which are needed to prove a convergence result in the spirit of 
 \cite{feireisl2019convergence, lukavcova2021convergence, lukacova2022convergence}. 
 We start with the weak convergence theorem similar to \cite{lukacova2022convergence}.

\begin{theorem}[Weak convergence]\label{eq:theorem_weak}
Let $\CU^h=\{\rho^h, \bm^h, \eta^h \}_{h\to 0}$ be a family of numerical solutions generated by the RD schemes \eqref{semi_unsteady} with the MOOD.  Let Assumptions  \ref{H0}, \ref{assumtion_1}, and \ref{H2} hold. Then there exists a subsequence $\CU^h$ (denoted again by $\CU^h$) such that 
\begin{equation}\label{eq_bounds}
\begin{aligned}
\rho^h &\to \rho \text{ weakly-(*) in } L^{\infty}((0,T)\times \Omega)\\
\eta^h &\to \eta \text{ weakly-(*) in } L^{\infty}((0,T)\times \Omega)\\
\bm^h &\to \bm \text{ weakly-(*) in } L^{\infty}((0,T)\times \Omega; \R^2))\\
\end{aligned}
\end{equation}
as $h\to 0$ and  $(\rho, \bm, \eta)$ is a DW solution of the complete Euler system \eqref{eq_Euler_conservation}. 
\end{theorem}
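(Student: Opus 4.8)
The plan is to combine the uniform stability bounds of Assumption \ref{assumtion_1} with the consistency relations of Theorem \ref{th_consistency}, and then pass to the limit $h\to 0$ using the theory of Young measures. First I would record the uniform bounds: by Assumption \ref{assumtion_1} and the discussion following it, the density is bounded away from zero and from above, the internal energy and pressure are bounded two-sidedly, and the velocity is bounded; hence $\rho^h$, $\bm^h$, $\eta^h$ together with the derived quantities $p^h$ and $\bu^h$ are bounded uniformly in $L^\infty((0,T)\times\Omega)$. Since $L^\infty$ is the dual of $L^1$, the Banach--Alaoglu theorem furnishes a subsequence along which $\rho^h\wwto\rho$, $\eta^h\wwto\eta$ and $\bm^h\wwto\bm$, which is exactly the convergence \eqref{eq_bounds}.

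Next I would invoke the fundamental theorem on Young measures: the uniformly bounded sequence $(\rho^h,\bm^h,\eta^h)$ generates, along a further subsequence, a parametrized probability measure $\{\nu_{t,\bx}\}$ such that $G(\rho^h,\bm^h,\eta^h)\wwto\est{\nu_{t,\bx};G}$ for every continuous $G$. By construction its first moments reproduce the weak limits, giving $\est{\nu,\tilde\rho}=\rho$, $\est{\nu,\tilde\bm}=\bm$ and $\est{\nu,\tilde\eta}=\eta$. Because the MOOD procedure enforces positivity of density and pressure at every degree of freedom, the support of $\nu_{t,\bx}$ lies in the admissible set $\{\tilde\rho\ge 0,\ (1-\gamma)\tilde\eta\ge\underline{s}\,\tilde\rho\}$, which establishes the constraints \eqref{eq:Young}.

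With these tools in hand I would pass to the limit in the three consistency identities \eqref{eq:consistency_rho}, \eqref{eq:consistency_m} and \eqref{eq:consistency_eta}. The linear terms converge directly by the weak-(*) convergence above; the nonlinear fluxes $\tfrac{\bm^h\otimes\bm^h}{\rho^h}$, $p^h$ and $\eta^h\bu^h$ converge weakly-(*) to the corresponding Young-measure averages $\est{\nu_{t,\bx};\,\cdot\,}$; and the error terms vanish in $L^1(0,T)$ by \eqref{eq:consistency_error}. The continuity equation is linear and passes to the limit unchanged. In the momentum balance and in the energy identity the defect measures appear precisely as the weak-(*) limits of the gap between a nonlinear function of the numerical solution and the same nonlinear function of the limit: this is how I would define the Reynolds defect $\mathfrak{R}$ and the energy defect $\mathfrak{E}$. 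Positivity of $\mathfrak{E}$ and positive semidefiniteness of $\mathfrak{R}$ then follow from Jensen's inequality, since the total energy density is convex in $(\rho,\bm,\eta)$ and, for each fixed $\xi\in\R^2$, the map $(\rho,\bm)\mapsto\tfrac{(\bm\cdot\xi)^2}{\rho}$ is convex, while $\nu_{t,\bx}$ is a probability measure.

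It remains to verify the defect compatibility condition and the time regularity, and here I expect the main obstacle. The compatibility inequality $c_1\mathfrak{E}\le\operatorname{tr}[\mathfrak{R}]\le c_2\mathfrak{E}$ should follow by comparing the concentration defect that produces $\mathfrak{E}$, namely that of $\tfrac12\tfrac{|\bm|^2}{\rho}+\rho e$, with the one producing $\operatorname{tr}[\mathfrak{R}]$, namely that of $\tfrac{|\bm|^2}{\rho}+d\,p$; since $p$ and $\rho e$ are equivalent through the equation of state and both are controlled two-sidedly, the two defects are comparable with explicit constants. The delicate point is to show rigorously that these concentrations are genuine bounded Radon measures of the correct sign and that the weak BV estimate \eqref{eq:weakBV} together with the energy inequality is strong enough to control $\operatorname{tr}[\mathfrak{R}]$ by $\mathfrak{E}$ through the energy balance. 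Finally, the weak time-continuity of $\rho$ and $\bm$ and the weak-BV regularity of $\eta$ follow from the weak formulations \eqref{eq:consistency_rho}--\eqref{eq:consistency_eta}: their right-hand sides are uniformly bounded, so the maps $t\mapsto\int_\Omega\rho^h\varphi\,\diff \bx$ (and their analogues) are equi-Lipschitz, respectively of equi-bounded variation for $\eta$, in $t$, and this regularity is inherited by the limits, yielding the stated $C_{weak}$ and $BV_{weak}$ spaces.
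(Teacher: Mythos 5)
Your proposal is correct and follows essentially the same route as the paper: the paper's own ``proof'' is a one-line deferral to the consistency-plus-stability machinery of the cited works of Feireisl, Luk\'a\v{c}ov\'a-Medvid'ov\'a et al., and your argument (uniform $L^\infty$ bounds via Assumption \ref{assumtion_1}, Banach--Alaoglu, Young measures, defect measures from the gap between weak limits of nonlinear fluxes and fluxes of weak limits, Jensen for their signs, and the equation of state for the defect compatibility) is precisely the content of that machinery, applied to the consistency formulation of Theorem \ref{th_consistency}. The only point worth tightening is the Young-measure support constraint $(1-\gamma)\tilde\eta\geq \underline{s}\tilde\rho$: this is a minimum-entropy principle and needs the two-sided bounds on pressure and density you recorded at the start (so that $s=\log(p/\rho^\gamma)$ is bounded below), not merely positivity at the degrees of freedom.
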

\begin{proof}
The proof follows analogous  steps as presented in  \cite{feireisl2021numerics, lukacova2022convergence}.
It uses the fact that the RD schemes with the MOOD approach lead to consistent and stable approximation of the Euler equations. 
\end{proof}

In numerical simulations weak convergence is not really suitable for visualization of a DW solution. 
It is more convenient to work with the Cesaro averages, known as $\mathcal{K}$-convergence in such context.
As demonstrated in  \cite[Theorem 10.5]{feireisl2021numerics}, we obtain  
 strong convergence of the Cesaro averages  
to a DW solution as well as strong convergence of the approximate deviation of the associated Young measures.
Here, we mean with \textbf{strong convergence of Cesaro averages} $\CU^{h_n}=(\rho^{h_n},\bm^{h_n},\eta^{h_n})$ that 
\begin{equation*}
\frac{1}{N} \sum_{n=1}^N \CU^{h_n} \to \CU \text{ as } N\to \infty \text{ in } L^q ((0,T)\times \Omega, \R^4) \text{ for any } 1\leq q<\infty.
\end{equation*}

In addition,  if we have more informations  on the  regularity of the solutions, we get the strong convergence of the sequence of approximated solutions by adapting the proof    \cite[Theorem 10.6]{feireisl2021numerics} to our RD setting.
\begin{theorem}[Strong Convergence of the RD scheme]\label{th_convergence}
Let $\CU^h=\{ \rho^h, \bm^h, \eta^h \}_{h\to 0}$ be numerical solutions of  RD scheme \eqref{semi_unsteady}   with the MOOD approach.  Let the initial data
be $\rho^h_0, \bm_0^h$ and $\eta_0^h, \rho_0 \geq \underline{\rho}>0, (1-\gamma)\eta_0 \geq \underline{\rho s}$.
 Further, let Assumptions  \ref{H0}, \ref{assumtion_1}, and \ref{H2} hold. 
 Then, the following holds:
\begin{itemize}
\item \textbf{weak solution}: \\
If $\CU=[\rho, \bm, \eta]$ obtained as a weak$^*$limit of $\{\rho^h, \bm^h,\eta^h\}_{h\to0}$, is a weak entropy solution and emanating from the initial data $\CU_0$, then 
$\nu_{t,\bx}=\delta_{\CU(t,\bx)} $ for a.a. $(t,\bx) \in (0,T) \times \Omega$, and 
\begin{align*}
(\rho^{h},\bm^h, \eta^h) &\to (\rho,\bm, \eta) \text{ in } L^q((0,T)\times \Omega;\R^4),\\
E(\CU^h)=\frac12 \frac{|\bm^{h}|^2}{\rho^{h}} + \rho^{h} e(\rho^{h}, \eta^{h}) &\to \frac12 \frac{|\bm|^2}{\rho} + \rho e(\rho, \eta)  \text{ in } L^q((0,T)\times \Omega)
\end{align*}
for any $1 \leq q <\infty$
\item \textbf{strong solution:}\\
Suppose that the Euler system admits a strong solution $\CU$ in the class 
$\rho, \eta \in W^{1,\infty}((0,T) \times \Omega), \bm \in W^{1,\infty}((0,T)\times \Omega; \R^2)$,
$\rho \geq \underline{\rho}>0$ in $[0,T]\times \Omega$  emanating from the initial data $\CU_0$. 
Then, for any $1\leq  q<\infty$ and $h\to 0$ 
\begin{align*}
(\rho^{h},\bm^h, \eta^h) &\to (\rho,\bm, \eta) \text{ in } L^q((0,T)\times \Omega; \R^4),\\
E(\CU^h)&\to E(\CU)  \text{ in } L^q((0,T)\times \Omega).
\end{align*}

\item \textbf{classical solutions:}\\
Let $\Omega\in \R^d$ be a bounded Lipschitz domain and
 $\rho\in C^1([0,T] \times \overline{\Omega})$, $\rho\geq \overline{\rho}>0,\; \bm\in C^1([0,T]\times \overline{\Omega};\R^2),\; \eta\in C^1 ([0,T] \times \overline{\Omega})$. Then $\CU=(\rho, \bm, \eta)$ is a classical solution to the Euler system and 
\begin{align*}
(\rho^{h},\bm^h, \eta^h) &\to (\rho,\bm, \eta) \text{ in } L^q((0,T)\times \Omega,\R^4)\\
\end{align*}
as $h\to 0$, for any $q\geq1$. 
\end{itemize}
\end{theorem}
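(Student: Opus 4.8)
The plan is to deduce strong convergence from the already-established weak convergence (Theorem \ref{eq:theorem_weak}) by combining the dissipative weak solution structure with a weak--strong uniqueness argument based on the relative energy functional, following the strategy of \cite[Theorem 10.6]{feireisl2021numerics}. The common thread of all three cases is that under the stated hypotheses the Young measure $\nu_{t,\bx}$ collapses to a Dirac mass concentrated on the limit $\CU(t,\bx)$, and that the energy defect $\mathfrak{E}$ and the Reynolds defect $\mathfrak{R}$ vanish; once this is known, strong convergence is a consequence of the equi-integrability furnished by Assumption \ref{assumtion_1} together with the elementary fact that a uniformly bounded sequence whose generated Young measure is a Dirac mass converges strongly in every $L^q$, $1\leq q<\infty$.

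For the strong solution case I would proceed as follows. First, by Theorem \ref{eq:theorem_weak} the weak-$(*)$ limit $(\rho,\bm,\eta)$ is a DW solution of the Euler system emanating from $\CU_0$, carrying a parametrized measure $\nu$ and defects $\mathfrak{E},\mathfrak{R}$ subject to the compatibility condition $c_1\mathfrak{E}\leq \operatorname{tr}[\mathfrak{R}]\leq c_2\mathfrak{E}$. Next I would write down the relative energy inequality comparing the DW solution with the assumed strong solution $\CU\in W^{1,\infty}$; this inequality bounds the relative energy at time $\tau$ by its initial value plus an integral remainder controlled by $\Vert\nabla\CU\Vert_{L^\infty}$ times the relative energy itself, together with a nonnegative contribution of the defects. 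Since both solutions emanate from the same initial data the relative energy vanishes at $\tau=0$, so a Gronwall argument forces the relative energy, and with it $\mathfrak{E}$ and $\mathfrak{R}$, to vanish for a.a.\ $\tau\in(0,T)$. The vanishing of the relative energy is precisely the statement that $\nu_{t,\bx}=\delta_{\CU(t,\bx)}$ a.e., from which the claimed $L^q$ convergence of $(\rho^h,\bm^h,\eta^h)$ and of the energy $E(\CU^h)$ follows.

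The weak entropy solution case rests on the same mechanism, the only difference being the source of the defect vanishing: here one does not postulate a strong solution but instead uses that the weak-$(*)$ limit $(\rho,\bm,\eta)$ is, by hypothesis, itself a weak entropy solution, i.e.\ it satisfies the balance laws of Definition \ref{def_dmv} with the genuine nonlinear fluxes and without defect. Comparing the DW solution against this weak solution through the relative energy again yields, after Gronwall, that $\mathfrak{E}=\mathfrak{R}=0$ and, by strict convexity of the total energy, that $\nu_{t,\bx}=\delta_{\CU(t,\bx)}$ a.e. The classical solution case is then immediate: by the regularity stated in \eqref{eq_regularity} a DW solution whose fields lie in $C^1([0,T]\times\overline{\Omega})$ is automatically a classical, hence strong, solution, so it falls under the strong-solution case and the corresponding $L^q$ convergence holds.

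The main obstacle I expect is making the relative energy inequality rigorous in the present measure-valued setting, in particular keeping track of how $\mathfrak{E}$ and $\mathfrak{R}$ enter the inequality with a favourable sign and verifying that the compatibility condition $c_1\mathfrak{E}\leq \operatorname{tr}[\mathfrak{R}]\leq c_2\mathfrak{E}$ is strong enough to absorb them, so that the Gronwall step closes. This is precisely the part imported from \cite{feireisl2021numerics}; the only genuinely new content for the RD scheme is that Theorem \ref{eq:theorem_weak}, itself resting on the consistency of Theorem \ref{th_consistency} and the structure preservation provided by the MOOD approach, supplies a bona fide DW solution to feed into that machinery.
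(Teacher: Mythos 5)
Your overall route is exactly the one the paper takes: its proof of Theorem \ref{th_convergence} is literally a citation of \cite[Theorem 10.6]{feireisl2021numerics}, and your sketch --- feed the DW solution produced by Theorem \ref{eq:theorem_weak} into the relative-energy / weak--strong uniqueness machinery, conclude that the defects $\mathfrak{E}$, $\mathfrak{R}$ vanish and that the Young measure collapses to $\delta_{\CU(t,\bx)}$, then upgrade to $L^q$ convergence using the uniform bounds of Assumption \ref{assumtion_1} --- is precisely the content of that theorem. Your treatment of the strong-solution case (relative energy against a $W^{1,\infty}$ comparison state, closed by Gronwall) and of the classical case (a DW solution with $C^1$ fields is classical by the compatibility statement \eqref{eq_regularity}, hence the strong-solution case applies) is correct.

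There is, however, a genuine flaw in your weak-solution case. You propose to run the same relative-energy-plus-Gronwall argument with the weak entropy solution as comparison state. This cannot work: the remainder terms in the relative energy inequality are controlled by Lipschitz norms ($\Vert \nabla_\bx \CU \Vert_{L^\infty}$, $\Vert \partial_t \CU \Vert_{L^\infty}$) of the comparison state, which a merely weak solution does not possess; indeed, if Gronwall closed at this level of regularity one would essentially obtain weak--weak uniqueness for the Euler system, contradicting the non-uniqueness results \cite{deLellis2010admissibility, chiodaroli2015global} recalled in the introduction. The mechanism in \cite[Theorem 10.6]{feireisl2021numerics} for this case is different and more elementary: the energy equation is part of the complete Euler system, so a weak entropy solution with periodic or no-flux boundary conditions conserves the total energy, $\int_\Omega E(\tau)\diff \bx = \int_\Omega E_0 \diff \bx$ for a.a. $\tau$. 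Inserting this equality into the DW energy inequality of Definition \ref{def_dmv} forces $\mathfrak{E}(\tau)=0$ for a.a. $\tau$, hence $\mathfrak{R}=0$ by the defect compatibility condition; Jensen's inequality for the strictly convex total energy $E(\rho,\bm,\eta)$, which holds with equality exactly when the measure is a Dirac mass, then yields $\nu_{t,\bx}=\delta_{\CU(t,\bx)}$ a.e., and strong $L^q$ convergence follows as you describe. No Gronwall step is involved in this case.
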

\begin{proof}
 Analog to \cite[Theorem 10.6]{feireisl2021numerics}.
\end{proof}

\section{Numerical Experiments}\label{sec_numerics}

In the following section, we verify  Theorem \ref{th_convergence} by a  numerical experiment. 
We consider a moving vortex. Initially, an isentropic perturbation $(\delta S=0)$ is applied to the system, such that 
\begin{equation*}
\begin{aligned}
\delta u &= - \mean{y} \frac{\beta}{2 \pi} \ee^{(1-r^2)/2},\\
\delta v &=  \mean{x} \frac{\beta}{2 \pi} \ee^{(1-r^2)/2},\\
\delta T &= -  \frac{(\gamma-1)\beta^2}{8 \gamma \pi^2} \ee^{(1-r^2)/2}.
\end{aligned}
\end{equation*}
The initial conditions are thus set to 
\begin{equation*}
\begin{aligned}
\rho &=T^{1/(\gamma-1)} = (T_\infty+ \delta T)^{1/(\gamma-1)}  = \left[1-   \frac{(\gamma-1)\beta^2}{8 \gamma \pi^2} \ee^{(1-r^2)/2} \right ]^{1/(\gamma-1)},  \\
\rho u &=   \rho( u_{\infty}+ \delta u) =\rho\left[1- \mean{y} \frac{\beta}{2 \pi} \ee^{(1-r^2)/2} \right],\\
\rho v &= \rho( v_{\infty}+ \delta v) =\rho\left[1+ \mean{x} \frac{\beta}{2 \pi} \ee^{(1-r^2)/2} \right],\\
\rho E &= \frac{\rho^\gamma}{\gamma-1} +\frac{1}{2} \rho (u^2+v^2),
\end{aligned}
\end{equation*}
where $\beta=5, r= \mean{x}^2+\mean{y}^2$ and $(\mean{x}, \mean{y}) =((x-x_0),(y-y_0))$. The computational domain is a square with length $10$ and center $(0,0)$. 
Periodic boundary conditions are considered where we identify the left and right boundaries 
and the lower and upper boundaries with each other. 
\begin{figure}[tb]
	\centering 
	\begin{subfigure}[b]{0.32\textwidth}
		\includegraphics[width=\textwidth]{%
      		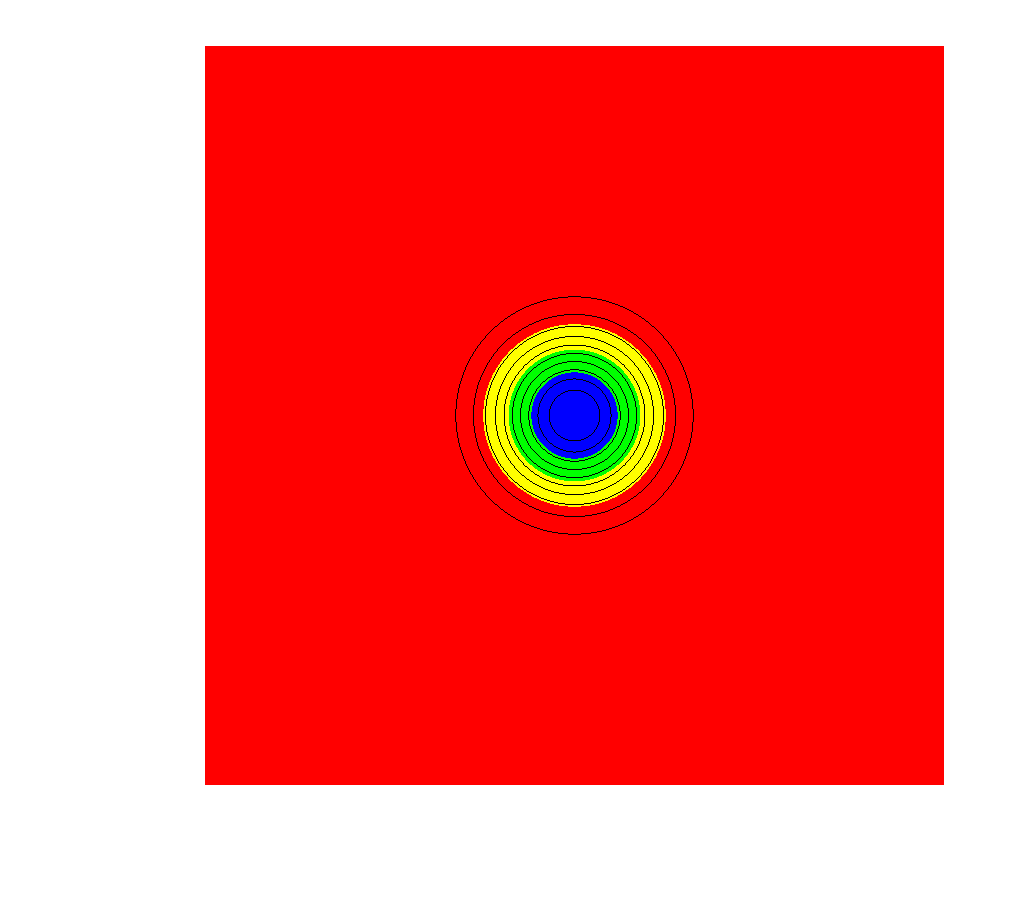} 
    		\caption{Initial Condition}
    		\label{fig:init_solution2}
  	\end{subfigure}%
	~
  	\begin{subfigure}[b]{0.32\textwidth}
		\includegraphics[width=\textwidth]{%
      		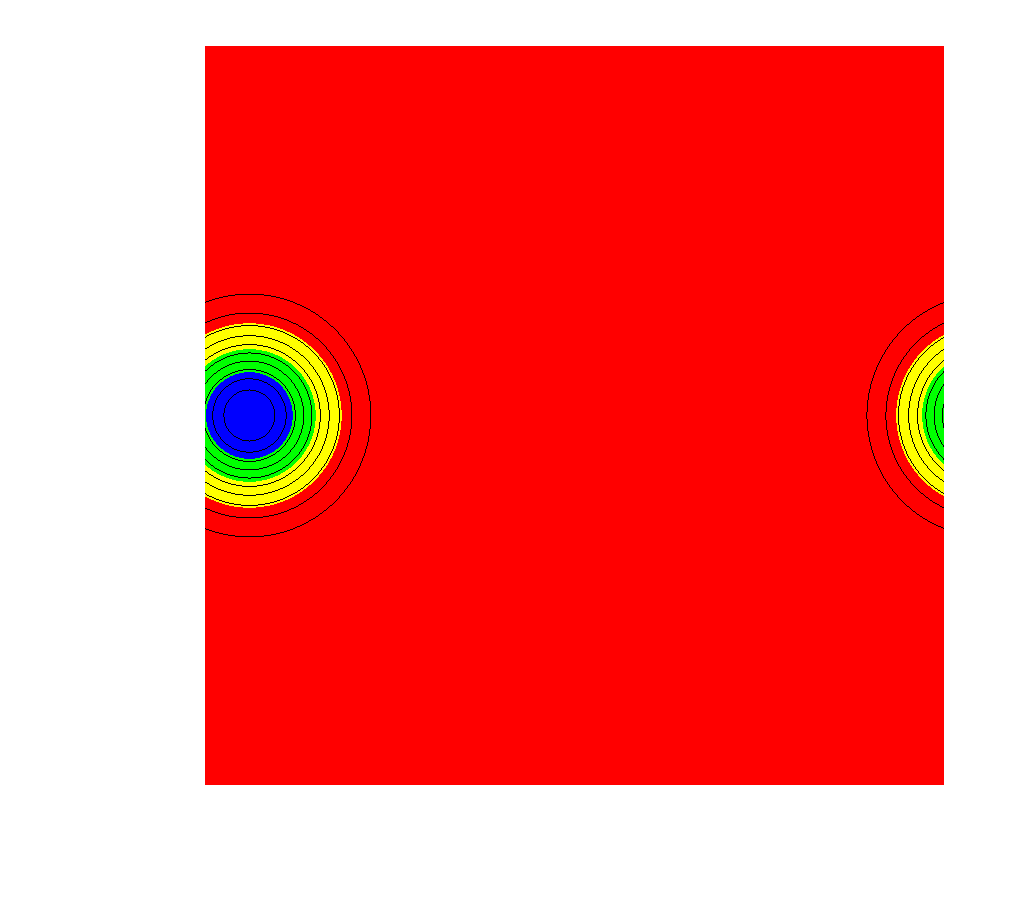} 
    		\caption{Numerical Solution after 10000 steps}
    		\label{fig:init_energy2}
  	\end{subfigure}%
	~
  	\begin{subfigure}[b]{0.32\textwidth}
		\includegraphics[width=\textwidth]{%
      		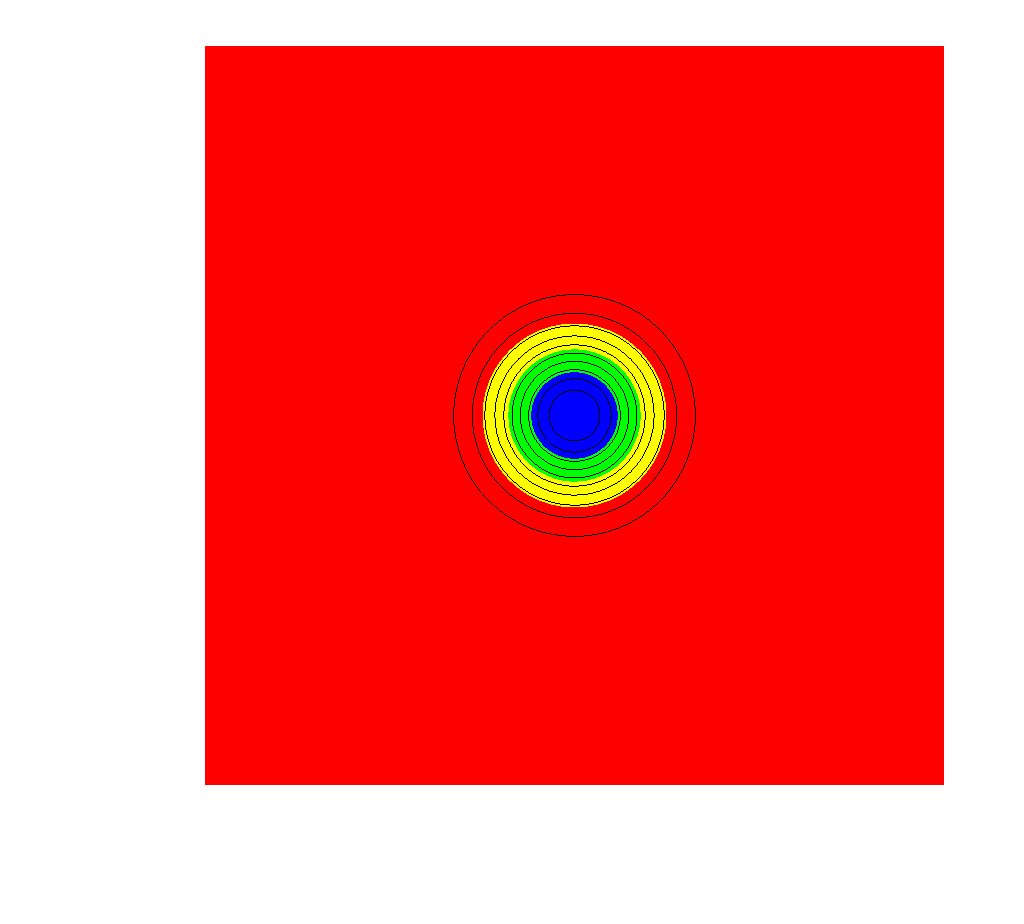} 
    		\caption{Numerical Solution at T=10}
    		\label{fig:init_ener}
	\end{subfigure}%
  	\caption{Moving vortex at different time levels}
  	\label{fig:init_approx2}
\end{figure}  
The center of the vortex is set in $(0,0)$. The selected parameters of the unperturbed flow are set to $u_{\infty}=1$, $v_{\infty}=0$ for the velocities, $p_{\infty}=1$ for the density. 
In this first simulation, we apply the pure Galerkin residual with entropy correction and $B1$ polynomials on an unstructured trianglular mesh. 
The vortex is moving to the right and at $T=10$ we reach again our starting point as can be seen in Figure \ref{fig:init_approx2}.  
The vortex is moving only to the right (it reduces to a simple advection), therefore, we have a classical smooth solution in this test case. 
From Theorem \ref{th_convergence}, a grid convergence investigation ensures that the errors decrease  to the analytical solution. 
We verify this result in Figure \eqref{fig:erorr}, where we plot the error behaviors in respect to the number of elements 
and clearly a grid convergence can be recognized. For a more specific investigation of the order of various different RD schemes and more experiments, we refer again to \cite{offner2020stability, michel2021spectral}.
\begin{figure}[tb]
	\centering 
	\begin{subfigure}[b]{0.32\textwidth}
		\includegraphics[width=\textwidth]{%
      		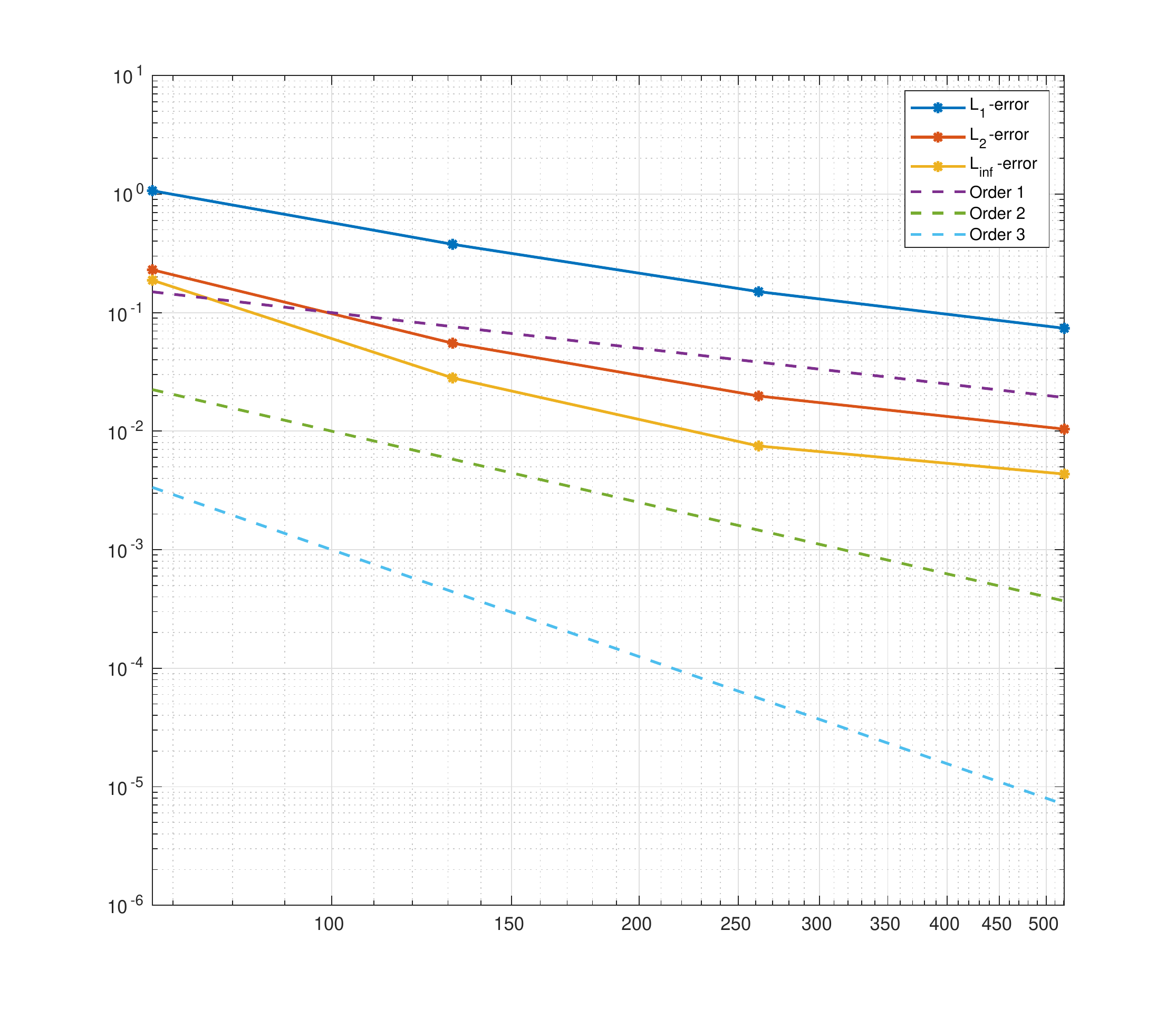} 
    		\caption{Density $\rho$-error behavior}
    		\label{fig:rho}
  	\end{subfigure}%
	~
  	\begin{subfigure}[b]{0.35\textwidth}
		\includegraphics[width=\textwidth]{%
      		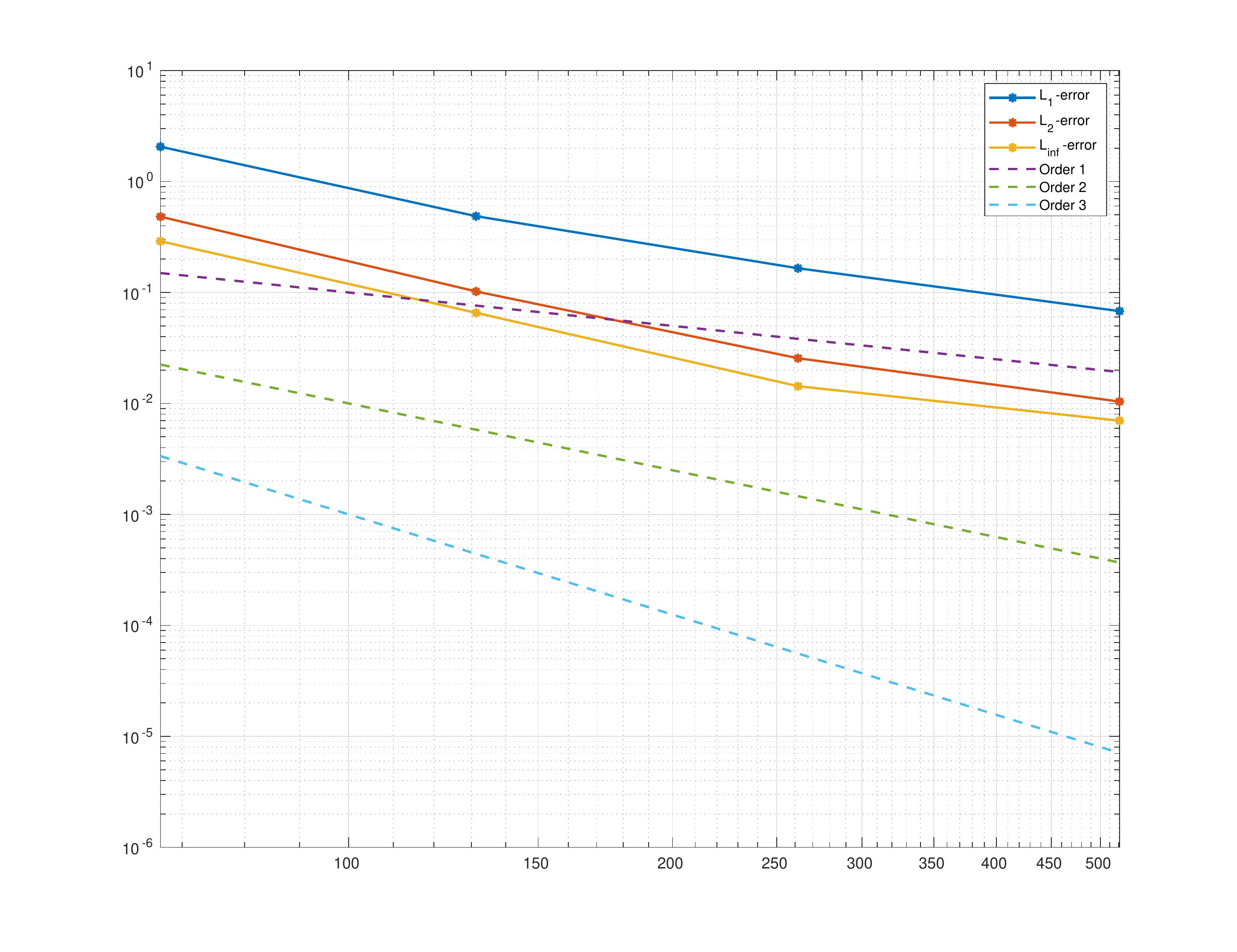} 
    		\caption{Velocity $u$-error behavior}
    		\label{fig:u}
  	\end{subfigure}%
	~
  	\begin{subfigure}[b]{0.33\textwidth}
		\includegraphics[width=\textwidth]{%
      		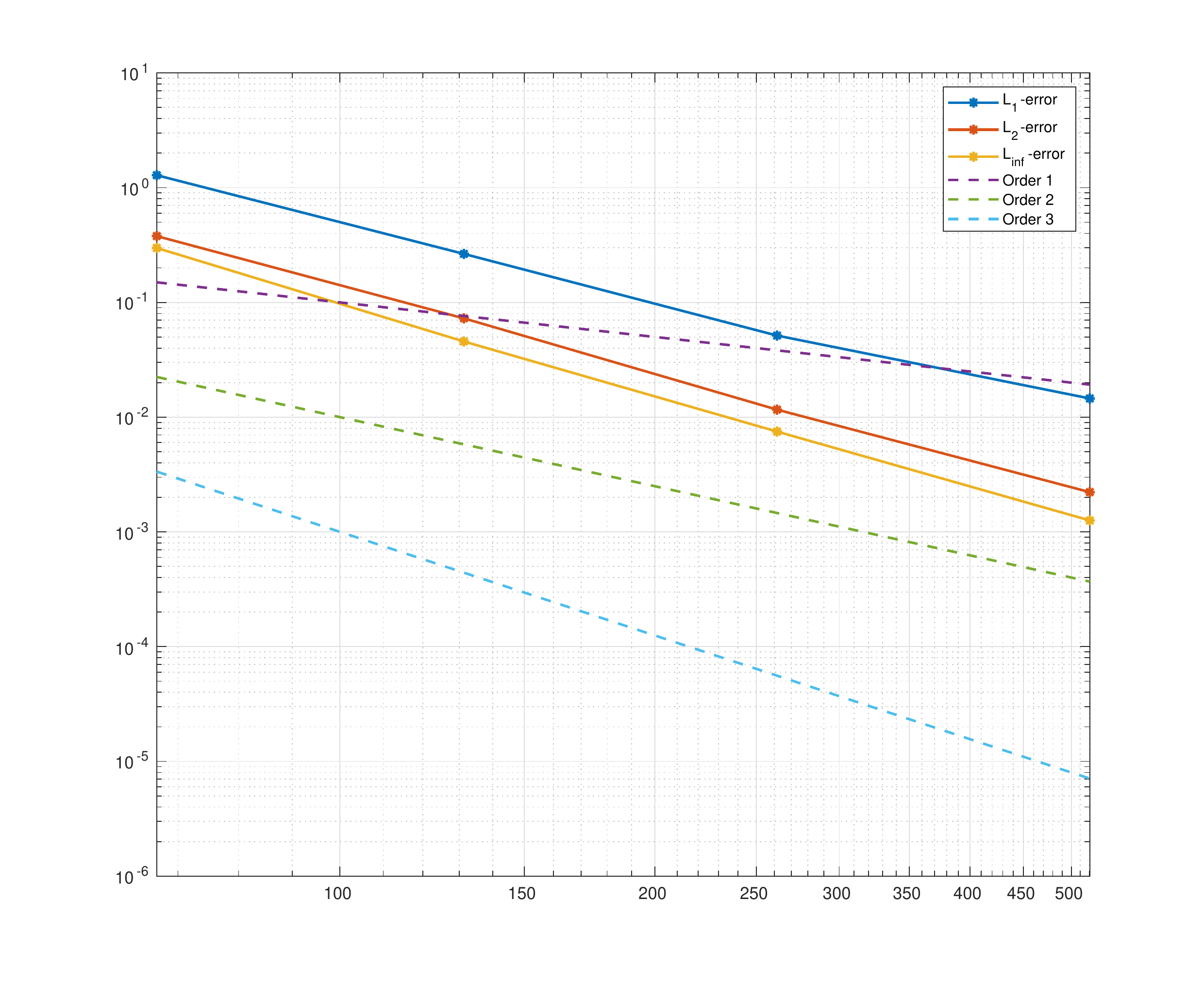} 
    		\caption{Pressure $p$-error behavior}
    		\label{fig:p}
	\end{subfigure}%
  	\caption{Error behaviour at $T=10$}
  	\label{fig:erorr}
\end{figure}

\section{Conclusion and Outlook} \label{sec_conclusion}

We have demonstrated a convergence analysis of the Euler equations via dissipative weak solutions for the general framework of residual distribution schemes including 
several high-order FE methods. Essential in our investigate was that the RD schemes ensure the underlying physical laws like positivity of internal energy and density and 
yields a consistent approximation of the Euler equations. We prove the consistency for every entropy dissipative RD schemes and so for all high-order finite element schemes  which 
can be interpreted in this framework. The proof used a weak BV estimate which is derived by the entropy dissipative property of the considered RD scheme. 
Due to the results of \cite{abgrall2018general, abgrall2019reinterpretation}, we can guaranteed  entropy dissipation of our considered RD scheme by the application of entropy correction terms 
as described in the mentioned literature. 
To guarantee the positivity of pressure and density, we have applied the MOOD approach and used as the parachute scheme the Lax-Friedrich residual which 
we have demonstrate that it is positivity preserving for both an explicit and  and implicit time integration method.  Our convergence analysis was restricted to the semi-discrete setting in the future, we plan to 
extend our investigations to the fully discrete framework \cite{abgrall2022relaxation, friedrich2019entropy}. A unifying analysis for several high-order FE methods will also be considered. \rev{Similar convergence results can be expected for other structure-preserving schemes, see \cite{kuzmin2020monolithic, kuzmin2022limiter, guermond2014second}.}
The concept of dissipative weak solutions is not restricted to the  Euler equations, but it has been also used for the compressible  Navier-Stokes equations \rev{\cite{feireisl2021numerics,zbMATH07229478, zbMATH07437238}, viscous magentohydrodynamics (MHD) \cite{bangwei2022convergence} and viscous multi-component flows \cite{zbMATH07559963,zbMATH07446443}}. However,  no analytical and numerical results 
exists -up to our knowledge- focussing on non-viscous MHD or  non-viscous multi-component/phase flows. Extensions of DW to those models will be considered in the future. In particular, the interpretation in multi-component 
seems very promising.  Here,  already in the description of the equations some of the quantities can be  expressed in terms of measures and so the concept of dissipative measure-valued/weak solutions seems even more beneficial.

\section{Appendix}\label{appendix}

\subsection*{Positivity property of the LxF residuals.}\label{positivity}

\subsubsection*{Explicit case}
In the following part, we repeat the proof about the positivity of the local Lax-Friedrich (LxF) (or Rusanov) residual following \cite{bacigaluppi2019posteriori}.
For the time-integration, we apply a simple Euler step first. 
Independent of the interpretation of the degrees of freedom, we obtain:
\begin{equation}
\label{remi:1}
\bU_\sigma^{n+1}=\bU_{\sigma}^n-\dfrac{\Delta t}{|C_{\sigma}|}\sum_{K|\sigma\in K} \Phi_{\sigma,\mathbf{x}}^{K,LxF}(\bU^n).
\end{equation}
For later, we can re-interpret  $\bU_\sigma^{n+1}$ using $ |K_\sigma|=\dfrac{|K|}{N_K}$: 
\begin{equation}
\label{remi:2}
\bU_\sigma^{n+1}=\sum_{K, \sigma\in K} \dfrac{|K_\sigma|}{|C_{\sigma}|} \bU_\sigma^{K,\star}
\text{ with }\bU_\sigma^{K,\star}=\bU_\sigma^n-\dfrac{\Delta t}{|K_\sigma|}\Phi_{\sigma,\mathbf{x}}^{K,LxF}(\bU^n). 
\end{equation}
The LxF described in \eqref{eq_LxF_residual} can be interpreted in the following two ways: 
\begin{enumerate}
\item The first version similar to the variational form
$\Phi_{\sigma,\mathbf{x}}^{K,LxF}(\bU^h)=\int_K \phi_\sigma \text{ div } \bbf^h(\bU^h) \;d\bx+\alpha_K\big (\bU_\sigma-\overline{\bU}\big )$, 
with $ \bbf^h(\bU^h)=\sum_{\sigma\in K}  \bbf_\sigma \phi_\sigma$,
\item where the second version is in   respect to the   nature of the degrees of freedom:
\begin{equation}\label{eq_second}
\Phi_{\sigma,\mathbf{x}}^{K,LxF}=\dfrac{1}{N_{K}}\int_{K}\text{ div } \bbf(\bU^h) \;d\bx+\alpha_K\big (\bU_\sigma-\overline{\bU}\big )
\end{equation}
\end{enumerate}
Here, we denote again by $\mean{\bU}$ the arithmetic mean.\\
Indeed, in the second interpretation it is important if we use Lagrange or Bernstein polynomials. 
The purpose of the following investigation is to estimate $\alpha_K$ in a way that we can ensure that the density and pressure remains positive after each time step. 
We will first consider the case of the Lagrange interpolation  and extend  it to Bernstein approximation afterwards.
\subsubsection*{Set of thermodynamical states}
Instead of working with the pressure, it is equivalent to focus on the positivity of the internal energies $e= E-\frac{1}{2}\frac{|\bm|^2}{\rho}$. 
In the following part, our goal is to have positive density and internal energies. We can define the set of admissible states including density, momentum and total energy. 
This set is convex under standard assumptions on the thermodynamics variables which holds for the here considered equations of state for standard perfect\footnote{It holds as well for stiffened gas.} gas. 
For Lagrange polynomials using nodal values it is straightforward to show that the density and the internal energy are positive. \\
Since the DOFs do not correspond to general point values in case of Bernstein polynomials, we will focus on this analysis.
Note that 
\begin{equation}
\begin{split}
\mathcal{K}_{th}'=\{ (\rho_\sigma, m_\sigma, E_\sigma)_{\sigma\in K} \text{ s. t. } & \rho=\sum_{\sigma\in K} \rho_\sigma B_\sigma\geq 0 \text{ on } K
\text{ and }  E-\frac{1}{2}\frac{|\bm|^2}{\rho}\geq 0,\\& \text{ with } E=\sum_{\sigma} E_\sigma B_\sigma \text{ and } \bm=\sum_{\sigma} \bm_\sigma B_\sigma\},
\end{split}
\label{set_convex_Bernstein}
\end{equation}
where we have denoted the momentum by $\bm=\rho \bu$, is convex and can be seen component wise. 
Instead of testing the inequalities for all $\bx\in K$, it is enough to apply  only  a finite set of points, e.g. Lagrange points. With a slight abuse of notations, we denote the resulting set again by $\mathcal{K}_{th}'$ and demonstrate that it is also convex. 
\begin{proof}
If the functions $\rho=\sum\limits_{\sigma} \rho_\sigma B_\sigma$ and $\rho'=\sum\limits_{\sigma'} \rho_{\sigma'} B_\sigma$ defined similarly are positive, and hence for any $\lambda\in [0,1]$,  the densities defined from $\bU=\lambda \bU+(1-\lambda)\bU'$ are positive on the simplex $K$.
The internal energy is a rational function of the conserved quantities.
We consider the mapping $\phi_e: (\rho, \bm, E)\mapsto E-\dfrac{1}{2}\dfrac{|\bm|^2}{\rho}.$ and demonstrate that the  internal energy $E-\dfrac{1}{2}\dfrac{|\bm|^2}{\rho}$ is a concave function of  the conservative variables $(\rho, \bm,E)$. 
 For  $\rho>0$,   $-\frac{|\bm|^2}{\rho}$ is obviously a concave function and   the internal energy is a sum of concave functions itself. \\
Hence, if $\bU$ and $\bU'$ belong to $\mathcal{K}_{th}'$, and $\lambda\in [0,1]$, the density function associated to $\lambda \bU+(1-\lambda)\bU'$ will be positive, and the internal energy is $\phi_e(\lambda \bU+(1-\lambda)\bU')$. Since $\phi_e$ is concave,
$$\phi_e(\lambda \bU+(1-\lambda)\bU')\geq \lambda\phi_e(\bU)+(1-\lambda)\phi_e(\bU')\geq 0,$$
and we can follow that $\lambda \bU+(1-\lambda)\bU'\in \mathcal{K}_{th}'$ holds and we obtain the desired result.
\end{proof}
Instead of working with  $\mathcal{K}_{th}'$ (which is hard to handle), we apply an even stronger condition. We consider 
the set
$$ \overline{\mathcal{K}}'_{th}=\bigg \{ \text{For all DOFS }\sigma, (\rho_\sigma, \bm_\sigma, E_\sigma ), \rho_\sigma\geq 0, E_\sigma-\frac{1}{2}\dfrac{|\bm_\sigma|^2}{\rho_\sigma}\geq 0\bigg \}.$$
Note that $$\overline{\mathcal{K}}'_{th}\subset \mathcal{K}'_{th}.$$
\begin{proof}
Thanks to the positivity of the Bernstein polynomials, and the Cauchy-Schwarz inequality, we have
$$\bigg ( \sum\limits_{\sigma\in K} m_{\sigma,i} B_\sigma\bigg )^2 \leq \bigg ( \sum\limits_{\sigma\in K}\rho_\sigma B_\sigma\bigg ) \bigg ( \sum\limits_{\sigma\in K}\dfrac{ m_{\sigma,i} ^2}{\rho_\sigma} B_\sigma\bigg ),$$
with $i=1,2$. Finally, we obtain
and then
$$\sum\limits_{\sigma\in K} E_\sigma B_\sigma -\frac{1}{2}\dfrac{\bigg ( \sum\limits_{\sigma\in K} m_{\sigma,1} B_\sigma\bigg )^2+
\bigg ( \sum\limits_{\sigma\in K} m_{\sigma,2} B_\sigma\bigg )^2}{\sum\limits_{\sigma\in K}\rho_\sigma B_\sigma}\geq \sum\limits_{\sigma\in K} \bigg ( E_\sigma-\frac{1}{2}\dfrac{|\bm_\sigma|^2}{\rho_\sigma}\bigg ) B_\sigma\geq 0$$
under the condition that $(\rho_\sigma, \bm_\sigma, E_\sigma)\in \overline{\mathcal{K}}'_{th}.$
\end{proof}

In the following, we specify now how we ensure from our LxF residuals that our approximated solution lies also in the next time step in $ \overline{\mathcal{K}}'_{th}$
\subsubsection*{Case of Lagrange interpolation}
First, we consider the Lagrange interpolation for simplicity. $\bU_\sigma=\bU(\sigma)$ is the evaluation of the solution at the Lagrange DOFs.
Please have in mind that they are defined in a simplex by their barycentric coordinates which are, for the degree $p$ and for triangles, $(\frac{i_1}{p+1},\frac{i_2}{p+1}, \frac{i_3}{p+1})$ with $i_1+i_2+i_3=p+1$. We obtain them for quads and hex elements by simply considering tensorisation of the 1D Lagrange points and the 3D case can be handle analogously. 

\subsubsection*{The one-dimensional case} 
We start our investigation for the one-dimensional setting for simplicity since the extension to two-dimension will be done along the edges in a similar matter. 
We rephrase the proof of Perthame and Shu \cite{PerthameShu1996positivity} for the classical LLF method and have 
\begin{equation}\label{Proofposi_lxf}
\bU_i^{n+1}=\bU_i^n-\frac{\Delta t}{\Delta x} \left[ \widehat{\bbf}(\bU_{i+1},\bU_i)-\widehat{\bbf}(\bU_i,\bU_{i-1})\right]
\end{equation}
with the LLF fluxes $\widehat{\bbf}$. 
Further, we denote now by $K$ the interval $[x_i, x_{i+1}]$ and introduce the splitting of
$$\bU_t+\bbf(\bU)_x=0$$
via
\begin{equation}
\label{remi:split:1}
\bU_t+\big (\bbf(\bU)+\nu\bU\big )_x=0, ~\text{and}~ \bU_t+\big (\bbf(\bU)-\nu\bU\big )_x=0.
\end{equation}
If $\nu=\max\limits_{x\in K} ||u(x)||+c(x)$ (with velocity $u$ and sound speed $c$), the local Lax-Friedrich method can be recast as a combination of the Godunov scheme and the downwind scheme, i.e. the left and right equations in \eqref{remi:split:1}.
Hence, we can reinterpret the value $\bU_i^{n+1}$ as the average of 
$$
\widetilde{\bU}=\bU_i^n-\frac{\Delta t}{\Delta x} \left[ \left( {\bbf}(\bU_i^n)+\nu\bU_i^n \right)-\left( \bbf(\bU_{i-1}^n)+\nu\bU_{i-1}^n \right) \right]$$
and
$$
\widetilde{ \widetilde{\bU} }= \bU_i^n-\frac{\Delta t}{\Delta x} \left[ \left({\bbf}(\bU_{i+1}^n )-\nu\bU_{i+1}^n \right)-\left( \bbf(\bU_{i}^n) -\nu\bU_{i}^n \right) \right].$$
In case that the states $\bU_i^n$, $\bU_{i+1}^n$ and $\bU_{i-1}^n$ belong to the convex set $\mathcal{K}_{th}$ defined through
$$\mathcal{K}_{th}= \left\{ (\rho, \rho u, E), \rho\geq 0, E-\frac{1}{2}\rho u^2\geq 0 \right\},$$
$\bU_i^{n+1}$ will belong as well to  $\mathcal{K}_{th}$. We can finally conclude that for both  the Lagrangian interpolation and the Bernstein reconstruction, the Rusanov scheme \eqref{Proofposi_lxf} for the one-dimensional case preserves the convex sets $\mathcal{K}_{th}$ for the Lagrange interpolation and $\mathcal{K}_{th}'$ for the Bernstein reconstruction.
\subsubsection*{The multi-dimensional case}
In the following, we extend now the investigation from before along the edges and estimate $\alpha_K$ to guarantee that we remain in our convex set. 
The residuals can be written using the first version of the LxF residuals:
\begin{equation}
\begin{split}
\Phi_{\sigma,\mathbf{x}}^{K,LxF}&=\int_K\phi_\sigma \; \text{ div }\bbf\; d\bx+\alpha \big ( \bU_\sigma-\bar\bU\big )
\\&=\sum_{\sigma'\in K} \left[ \left( \int_K \phi_\sigma\nabla \phi_{\sigma'}d\bx \right) \cdot \bbf_{\sigma'}+\frac{\alpha}{N_K} \left( \bU_\sigma-\bU_{\sigma'} \right) \right] \\ & =\sum_{\sigma'\in K} \left[  2\left( \int_K \phi_\sigma\nabla\phi_{\sigma'}d\bx \right) \cdot\dfrac{\bbf_\sigma+\bbf_{\sigma'}}{2}+\frac{\alpha_K}{N_K} \left( \bU_\sigma-\bU_{\sigma'} \right) \right]
\end{split}
\end{equation}
due to $\sum\limits_{\sigma'\in K}\int_K \phi_\sigma\nabla\phi_{\sigma'}d\bx=\int_K \phi_\sigma \nabla (1)\; d\bx=0$.
In addition, we can rewrite \eqref{remi:2} as:
\begin{equation*}
\begin{split}
\bU_\sigma^\star&=\bU_{\sigma}^n-\dfrac{\Delta t}{|K_\sigma|}\sum_{\sigma'\in K} \left[ \left( 2\int_K \phi_\sigma\nabla\phi_{\sigma'} d\bx \right) \cdot\dfrac{\bbf_\sigma+\bbf_{\sigma'}}{2}+\frac{\alpha}{N_K}  \left( \bU_\sigma-\bU_{\sigma'} \right) \right]\\
&=\dfrac{1}{N_{K}} \sum\limits_{\sigma'\in K}\left[ \bU_{\sigma}^n-\dfrac{\Delta t}{|K_\sigma|} \;\omega_{\sigma\sigma'}\cdot \dfrac{\bbf_\sigma+\bbf_{\sigma'}}{2}+\alpha_K \left( \bU_\sigma-\bU_{\sigma'} \right) \right]
\end{split}
\end{equation*}
where the vector $\omega_{\sigma\sigma'}=2N_K\int_K \phi_\sigma\nabla\phi_{\sigma'}\; d\bx.$ can be interpreted as a scaled normal since $\int_K\phi_\sigma=\frac{|K|}{N_K}$ holds.
We can derive now the estimate for  $\alpha_K$. It has to hold that 
$\alpha_K\geq \max_{\bx\in K} \rho(\mathbf{A(U(\bx))}\cdot\omega_{\sigma\sigma'})$,
where for any vector $\bn=(n_x,n_y)$, $\mathbf{A(U)}\cdot \bn=\dpar{f_1}{\bU}(\bU) n_x+\dpar{f_2}{\bU}(\bU) n_y$, and $f_1$ (resp $f_2$) is the $x$- (resp. $y$-) component of the flux $\bbf$. 
\begin{remark}[Version 2]
The consideration from above can be directly extended to the second version of the interpretation of the LxF residuals \eqref{eq_second} by simple realizing 
$$\int_K \mathrm{ div }\; \bbf \; d\bx =\sum_{\sigma\in K} \left( \int_K\nabla \phi_\sigma\; d\bx \right) \cdot \bbf_\sigma$$
holds. An analogous estimate for $\alpha_K$ can be derived. 
\end{remark}
\subsubsection*{Bernstein Polynomials}
Next, we extend the estimation from before in case that Bernstein polynomials are used. The basic idea now is to transfrom everything back to the Lagrange case and use the result from above. 
When applying Bernstein polynomials and their DOFs, we have at the at the Lagrange degrees of freedom (denoted by $\sigma_L$ in the following)
$$U(\sigma_L)=\sum_{\sigma}U_\sigma B_\sigma(\sigma_L), \text{ with }B_\sigma(\sigma_L)\geq 0, \text{ and  }\sum_\sigma B_\sigma(\sigma_L)=1.$$
Hence we can define a  linear mapping
$M=\bigg(B_\sigma(\sigma_L)\bigg )$ from the Bernstein DOFs to the Lagrange DOFs.
If  we now consider our scheme 
$$\bU_\sigma^{K,\star}=\bU_\sigma^n-\dfrac{\Delta t}{|K_\sigma|}\Phi_{\sigma,\mathbf{x}}^{K,LxF}(\bU^n)$$ for the Bernstein DOFs, 
we can re-write it at the Lagrange points via 
$$\begin{pmatrix}
\bU(\sigma_L)^{K,\star}\end{pmatrix}=\begin{pmatrix}\bU^n(\sigma_L)\end{pmatrix}-\dfrac{\Delta t}{|K_\sigma|}M\begin{pmatrix}\Phi_{\sigma,\mathbf{x}}^{K,LxF}(\bU^n)\end{pmatrix}. 
$$
Since we approximate the flux as
$\bbf=\sum\limits_{\sigma\in K} \bbf_\sigma B_\sigma$, we realize that  
$M\begin{pmatrix}\Phi_{\sigma,\mathbf{x}}^{K,LxF}(\bU^n)\end{pmatrix}$ is nothing more than the LxF residuals computed with the Lagrange interpolation
$\bbf=\sum\limits_{\sigma_L\in K}\bbf(\sigma_L)\phi_{\sigma_L}$ where $\phi_{\sigma_L}$ denotes the Lagrange polynomial for $\sigma_L$. 
The estimation for $\alpha_K$ can be derived now following our previous analysis. At all, we can ensure the positivity of the density and the internal energy at the Lagrange points. 

\subsubsection*{Implicit case}

Here, we give now an extension of above consideration if we apply an implicit time-integration method (implicit Euler) instead. This is also done for the first time and will be essential in future work. 
The RD version reads like
\begin{equation}
\label{remi:1}
\bU_\sigma^{n+1}=\bU_{\sigma}^n-\dfrac{\Delta t}{|C_{\sigma}|}\sum_{K|\sigma\in K} \Phi_{\sigma,\mathbf{x}}^{K,LxF}(\bU^{n+1}).
\end{equation}
If we approximate the flux $\bbf$ by:
$\bbf\approx \sum\limits_{\sigma\in K}\bbf_\sigma\phi_\sigma$
as a polynomial of degree $p$. We can write the residual as
$
 \Phi_{\sigma,\mathbf{x}}^{K,LxF}=\sum\limits_{\sigma'\in K}c_{\sigma\sigma'}\:\bbf_\sigma
$
with
$$c_{\sigma\sigma'}=\left \{\begin{array}{ll}
\int_K\phi_\sigma\text{div }\phi_\sigma \;d\bx+\alpha_K\frac{N_K-1}{N_K},& \text{ if } \sigma=\sigma',\\
~\\
\int_K\phi_{\sigma'}\text{div }\phi_\sigma \;d\bx-\alpha_K\frac{1}{N_K}, & \text{ else.}
\end{array}\right .$$
Note that $\sum\limits_{\sigma'\in K} c_{\sigma\sigma'}=0.$
We see that if 
\begin{equation}
\label{condition}\alpha_K\geq \max\limits_{\sigma, \sigma'\in K}\bigg \vert \int_K\phi_\sigma\text{div }\phi_{\sigma'} \;d\bx\bigg \vert,\end{equation}
then $c_{\sigma\sigma'}\geq 0$.
We use this knowledge to demonstrate the positivity of the density for the Euler equations as an example. 
The RD scheme  is:
$$\vert C_\sigma \vert \big ( \bU^{n+1}_\sigma-\bU^n_\sigma\big ) +\Delta t \sum_{K, \sigma \in K} \Phi_\sigma^K(\bU^{n+1})=0.$$
Due to a result from Forestier and Gonzales-Rodelas \cite{forestier2009}, the system is solvable. 
 Writing it for the density, we obtain:
$$\vert C_\sigma \vert \big ( \rho^{n+1}_\sigma-\rho^n_\sigma\big ) +\Delta t \sum_{K, \sigma \in K} \big (\Phi_\sigma^K\big )^\rho(\bU^{n+1})=0,$$
with
$$\big (\Phi_\sigma^K\big )^\rho(\bU^{n+1})=\sum_{\sigma'\in K} \big (c_{\sigma\sigma'}\cdot \bU_\sigma^{n+1}\big ) \rho_\sigma^{n+1},$$
so that we have
\begin{equation*}
\big (|C_\sigma|+\Delta t \sum_{K, \sigma\in K} c_{\sigma\sigma}\big ) \rho_\sigma^n+\sum\limits_{\sigma'\neq \sigma} d_{\sigma\sigma'}\rho_{\sigma'}^{n+1}=|C_{\sigma}|\rho_\sigma^n
\end{equation*}
with
\begin{equation*}
d_{\sigma\sigma}=|C_\sigma|+\Delta t \sum_{K, \sigma\in K} c_{\sigma\sigma}
\end{equation*}
and
\begin{equation*}
d_{\sigma\sigma'}=\Delta t\sum_{K, \sigma \in K}c_{\sigma\sigma'}.
\end{equation*}
We note that under the condition \eqref{condition}, $d_{\sigma\sigma'}<0$ while $d_{\sigma\sigma}>0$ independantly of $\Delta t$ because
$$d_{\sigma\sigma}=\vert C_\sigma\vert +\Delta t\sum_{K, \sigma\in K} c_{\sigma\sigma}=
\vert C_\sigma\vert +\Delta t\bigg ( \int\phi_\sigma\nabla\phi_\sigma\; d\bx+\sum_{K, \sigma\in K}\frac{N_K-1}{N_K}\alpha_K\bigg )=
\vert C_\sigma\vert +\Delta t\sum_{K, \sigma\in K}\frac{N_K-1}{N_K}\alpha_K>0.
$$
If $\rho^0$ has a compact support, then $\rho^{n}$ has a compact support.
We see that $\rho^{n+1}$ is obtained (if one know the velocity) by a linear system of the type $M\rho^{n+1}=\rho^n$ where $M$ is a $M$ matrix. Therefore, it follows that the inverse is positive definite. 


\subsubsection*{Case where  the flux is not interpolated}
In that case, we assume that the residual writes:
\begin{equation}
\label{remi:new:1}
\Phi_{\sigma,\mathbf{x}}^{K,LxF}(\bU^h)=-\int_K \nabla \varphi_\sigma\cdot \bbf(\bU^h)\; \;d\bx+\int_{\partial K}\varphi_\sigma\bbf(\bU^h)\cdot\bn\; d\gamma +\alpha_K\big (\bU_\sigma-\overline{\bU}\big )
\end{equation}
with the forward Euler time stepping.
The difficulty is that one cannot any longer rely on the one dimensional estimates as before. To study this case, we rely on the geometrical technique of Wu and Shu \cite{WuShu}\footnote{In this paper, noticing that all the "interesting" admissibility domains  are  convex and are the intersection of domains of the type $\{\psi_l>0\}$, where  $\psi_l$ is concave w.r.t. the conserved variables. They show how to exploit that a convex domain is the intersection of half spaces and demonstrate that a scheme is invariant domain preserving. In particular, to make sure that the internal energy $e=E-\tfrac{1}{2}\frac{\bm^2}{\rho}$ is positive, it is enough to check that for any velocity vector $\bv$, $(1, -\bv, \tfrac{\bv^2}{2})^T U\geq 0$. This is what we use here.}.
It is enough to show that \eqref{remi:new:1} preserves the positivity of the density and that a combination of the density, momentum and energy residual satisfies a positivity bound.
First we write
$$\bU_\sigma^{n+1}=\sum\limits_{K, \sigma\in K} \frac{|K_\sigma|}{|C_\sigma|} \bU_\sigma^{n+1,K}$$
with
$|K_\sigma|=\tfrac{|K|}{\# dof}$ and
$$\bU_\sigma^{n+1,K}=\bU_\sigma^n-\frac{\Delta t}{|K_\sigma|}\Phi_{\sigma,\mathbf{x}}^{K,LxF}(\bU^n)$$
so we will focus our attention on $\bU_\sigma^{n+1,K}$. And for simplicity, we restrict ourself to the Lagrange case.

Let us look at the positivity of the density. We have
$$\rho_\sigma^{n+1,K}=\rho_\sigma^n-\frac{\Delta t}{|K_\sigma|} \bigg ( -\int_K \nabla\varphi_\sigma \big ( (\rho  \bu)^h-\rho_\sigma  \bu_\sigma \big ) \; d\bx+
\int_{\partial K} \varphi_\sigma\big ( (\rho  \bu)^h-\rho_\sigma  \bu_\sigma \big )\cdot \bn\; d\gamma+\alpha_K (\rho_\sigma -\overline{\rho})\bigg ).$$
Note that it is in writing $(\rho\bu)_\sigma=\rho_\sigma \bu_\sigma$ that we say we consider Lagrange interpolant.

Next we introduce 
$$\mathbf{N}_{\sigma\sigma'}=-\int_K \nabla\varphi_\sigma \varphi_{\sigma'} \; d\bx+\int_K \varphi_\sigma\varphi_{\sigma'}\bn\; d\gamma,$$
so
$$\rho_\sigma^{n+1,K}=\rho_\sigma^n-\frac{\Delta t}{|K_\sigma|} \sum_{\sigma'\in K} \bigg ( \big ( \rho_{\sigma'} \bu_{\sigma'}-\rho_\sigma  \bu_\sigma \big )\cdot \mathbf{N}_{\sigma\sigma'}
+\alpha_K (\rho_\sigma -\overline{\rho})\bigg ).$$
Then we see that
$$\rho_\sigma^{n+1,K}= \big (1-\frac{\Delta t}{|K_\sigma|} \big (-\bu_\sigma\cdot \mathbf{N}_{\sigma\sigma'}+\alpha_K\big )\big )\rho_\sigma^n+\frac{\Delta t}{|K_\sigma|}\sum_{\sigma'\neq\sigma }\bu_{\sigma'}\cdot \big (\mathbf{N}_{\sigma\sigma'}+\frac{\alpha_K}{\# dof}\big )\rho_{\sigma'}$$
so that positivity is met if 
$$\alpha_K>\max\limits_{\bx\in K}\big ( \Vert \bu^n(\bx)\Vert +a(\bx))\times\max\limits_{\sigma,\sigma'\in K}  \Vert \mathbf{N}_{\sigma\sigma'}\Vert.$$

Let $\bv^\star\in \R^d$ an arbitrary vector. We compute
$$S=\frac{\Vert\bv^\star\Vert^2}{2}\Phi_{\sigma, \rho}-\bv^\star \cdot \Phi_{\sigma,\mathbf{m}}+\Phi_{\sigma, E}.$$ 
For $\mathbf{N}=\bn$ or $\nabla\varphi_\sigma$, we have
$$
D_{\mathbf{N}}:=\frac{\Vert\bv^\star\Vert^2}{2}\rho \bu\cdot\mathbf{N}-\bv^\star\cdot\big ( \rho \bu\cdot\mathbf{N}\bu +p\mathbf{N}\big )+\bu\cdot\mathbf{N} (E+p)=p(\bu-\bv^\star)\cdot \mathbf{N}+\bu\cdot\mathbf{N}\big ( \frac{\rho}{2}\Vert \bu-\bv^\star\Vert^2+\rho e\big ),$$
so that
$$D_{\mathbf{N}}\leq p \vert (\bu-\bv^\star)\cdot \mathbf{N}\vert + \vert \bu\cdot\mathbf{N}\vert \big ( \frac{\rho}{2}\Vert \bu-\bv^\star\Vert^2+\rho e\big ).$$
Then we show, for a perfect gas, that
$$p \vert (\bu-\bv^\star)\cdot \mathbf{N}\vert\leq \Vert \mathbf{N}\Vert \; a\big ( \frac{\rho}{2}\Vert \bu-\bv^\star\Vert^2+\rho e\big ).$$
For this, we first have
$$p \vert (\bu-\bv^\star)\cdot \mathbf{N}\vert\leq p\Vert \bu-\bv^\star \Vert \; \Vert \mathbf{N}\Vert, $$ and this amounts to showing that
$$p\Vert \bu-\bv^\star \Vert \; \Vert \mathbf{N}\Vert\leq \Vert \mathbf{N}\Vert \; a\big ( \frac{\rho}{2}\Vert \bu-\bv^\star\Vert^2+\rho e\big )$$ which means that the quadratic polynomial
$$ a\big ( \frac{\rho}{2}\ X^2+\rho e\big )-p X$$ is always positive, i.e. (since the product of roots and the trace are both positive) that the discriminant
$$\Delta =p^2-2\rho^2 \; a^2\;e$$ is negative. For a perfect gas, we have
$$\Delta =p^2-2\rho\frac{\gamma p}{\rho }\times \frac{p}{\gamma -1}=p^2\big ( 1-2\frac{\gamma}{\gamma-1}\big )=-p^2\frac{\gamma+1}{\gamma-1}<0.$$
All this shows that
$$D_{{\mathbf{N}}}\leq \Vert \mathbf{N}\Vert \times \big ( \Vert \bu(\bx)\Vert +a(\bx)\big )\times \big ( \frac{\rho}{2}\Vert \bu-\bv^\star\Vert^2+\rho e\big ).$$
Denoting by $W:=\frac{\Vert\bv^\star\Vert^2}{2}\rho-\bv^\star \cdot\mathbf{m}+E,$  and
$D_{\mathbf{N}}(\sigma)$ the value of $D_{\mathbf{N}}$ evaluated for $\sigma$
we see that, as for the density, 
$$W_\sigma^{n+1,\star}=W_\sigma^n-\dfrac{\Delta t}{|K_\sigma|}\bigg (-\int_K \big (D_{\nabla \varphi_\sigma}-D_{\nabla \varphi_\sigma}(\sigma)\big )\; d\bx+\int_K \big (D_{\bn}-D_{\bn}(\sigma)\big )\varphi_\sigma \; d\bx+\alpha_K \big ( W_\sigma-\overline{W}\big )\bigg )$$
Then we can use the same technique as for the density.

\subsubsection*{Entropy production}
The question is starting from any version of the Lax-Friedrich residual, wether or not we can have an entropy inequality, and more important a bound on the entropy production.  We will focus on the form \eqref{remi:new:1} which is the one used in the paper.
For the implicit version, there is not much to do since 
$$V(\bU)^T(\bU-\bU')\leq S(\bU)-S(\bU').$$
The explicit case is more involved. First,
$$S(\bU^{n+1}_\sigma)\leq \sum_{K, \sigma\in K}\frac{|K_\sigma|}{|C_\sigma|} S(\bU^{n+1,K}_\sigma),$$
then  (where $V^T_\sigma$ is  evaluated at time $t_n$)
\begin{equation*}
\begin{split}
S(\bU^{n+1,K}_\sigma)-S(\bU^n_\sigma)&=
V^T_\sigma(\bU^n_\sigma)^T( \bU_\sigma^{n+1,K})-S(\bU^n_\sigma))\\
&\qquad +\frac{1}{2}\int_0^1A_0(t\bU_\sigma^{n+1,K}+(1-t)\bU^n_\sigma) \; dt\cdot \big (\bU_\sigma^{n+1,K}-\bU^n_\sigma ,\bU_\sigma^{n+1,K}-\bU^n_\sigma\big )\\
&=\frac{\Delta t}{|K_\sigma|} V_\sigma^T\Phi_\sigma^{LxF}(\bU^n)\\
&\qquad+\frac{1}{2}\bigg ( \int_0^1A_0(t\bU_\sigma^{n+1,K}+(1-t)\bU^n_\sigma)\; dt\bigg )\cdot \big (\bU_\sigma^{n+1,K}-\bU^n_\sigma, \bU_\sigma^{n+1,K}-\bU^n_\sigma\big ) 
\end{split}
\end{equation*}
To simplify the writing, we  write $A_{0,\sigma\sigma'}^{n+1/2}=\int_0^1A_0(t\bU_\sigma^{n+1,K}+(1-t)\bU_\sigma^n)\; dt$. We get
$$V_\sigma^T\Phi_\sigma^{LxF}(\bU^n)=\bV_\sigma^T\bigg (-\int_K\nabla\varphi_\sigma \cdot \bbf(\bU^n)\; d\bx+\int_{\partial K}\varphi_\sigma\bbf(\bU^n)\cdot \bn\; d\gamma\bigg )+
\frac{\alpha_K}{\#dof}\sum_{\sigma'}V^T\big ( \bU^n_\sigma-\bU^n_{\sigma'}\big ).$$
Denoting 
$$\Psi_\sigma=\bV_\sigma^T\bigg (-\int_K\nabla\varphi_\sigma \cdot \bbf(\bU^n)\; d\bx+\int_{\partial K}\varphi_\sigma\bbf(\bU^n)\cdot \bn\; d\gamma\bigg ),$$
we see that
$$\sum_{\sigma\in K}\Psi_\sigma=\int_{\partial K}\bbg(\bU^n)\cdot\bn\; d\gamma$$ so these are proper residuals, and using again the convexity of the entropy,
we get
$$V_\sigma^T\big ( \bU^n_\sigma-\bU^n_{\sigma'}\big )=S(\bU^n_\sigma)-S(\bU^n_{\sigma'})+\frac{1}{2}\int_0^1 A_0(t\bU^n_\sigma+(1-t)\bU^n_{\sigma'})\; dt \cdot \big ( \bU^n_\sigma-\bU^n_{\sigma'}, \bU^n_\sigma-\bU^n_{\sigma'})$$
In the following, we set
$$A_{0,\sigma,\sigma'}^n=\int_0^1 A_0(t\bU^n_\sigma+(1-t)\bU^n_{\sigma'})\; dt.$$

In the end we get
\begin{equation*}
\begin{split}
S(\bU_\sigma^{n+1,K})&=S(\bU^n_\sigma)-\frac{\Delta t}{|K_\sigma|} \bigg ( \Psi_\sigma+\frac{\alpha_K}{\#dof}\sum_{\sigma'\in K}\big (S(\bU^n_\sigma)-S(\bU^n_{\sigma'})\big )\bigg )\\
&-
\frac{\alpha_K\Delta t}{2|K_\sigma|}\sum_{\sigma'\in K} A_{0, \sigma,\sigma'}^n \cdot \big ( \bU^n_\sigma-\bU^n_{\sigma'}, \bU^n_\sigma-\bU^n_{\sigma'})\\
&\quad +
\frac{\alpha_K\Delta t^2}{2|K_\sigma|^2}\sum_{\sigma'\in K} A_{0, \sigma,\sigma'}^{n+1/2} \cdot \big (\Phi_\sigma^{LxF,K}(\bU^n),\Phi_\sigma^{LxF,K}(\bU^n) ).
\end{split}
\end{equation*}
Denoting by 
$$\mathcal{D}(\Delta t)=-\sum_{\sigma'\in K} A_{0, \sigma,\sigma'}^n \cdot \big ( \bU^n_\sigma-\bU^n_{\sigma'}, \bU^n_\sigma-\bU^n_{\sigma'})+\frac{\Delta t}{2|K_\sigma|}\sum_{\sigma'\in K} A_{0, \sigma,\sigma'}^{n+1/2} \cdot \big (\Phi_\sigma^{LxF,K}(\bU^n),\Phi_\sigma^{LxF,K}(\bU^n) ), $$
we see that $\mathcal{D}(0)=0$, and for non constant state in $T$, $\dpar{\mathcal{D}}{ t}(0)<0$, so for $\Delta t$ small enough, we have
$\mathcal{D}(\Delta t)\leq 0$ and 
$$\Vert \mathcal{D}(\Delta t)\Vert \leq C \sum_{\sigma'} \Vert \bU^n_\sigma-\bU^n_{\sigma'}\Vert^2$$ where $C$  depends on $\Delta t $ and $\bU^n$, and this can be interpreted as
$$\Vert \mathcal{D}(\Delta t)\Vert \leq C h^2 \int_{\partial K}\Vert \nabla \bU^n\Vert^2 \; d\gamma$$
because $\alpha_K=O(h^{d-1})$, so it is absorbed in the integral over the boundary.

\section*{Acknowledgements}
R. A. has been supported by SNF grant 200020\_204917 "
Structure preserving and fast methods for hyperbolic systems of conservation laws". 
M.L.-M. has been founded by the German Science Foundation (DFG) under the collaborative research projects TRR SFB 165 (Project A2) and TRR SFB 146 (Project C5).
M.L.-M. and P.\"O. also gratefully acknowledge support of the Gutenberg Research College,  JGU Mainz.

\bibliographystyle{abbrv}
\bibliography{literature}

\end{document}